\documentclass{amsart}
\usepackage{amsthm}
\usepackage{amssymb}
\usepackage[nobysame]{amsrefs}

\usepackage{tikz-cd}
\usepackage{colonequals}
\usepackage{enumerate}
\usepackage{eucal}

\usepackage{hyperref}
\hypersetup{%
  bookmarksnumbered=true,%
  colorlinks=true,%
  linkcolor=blue,%
  citecolor=blue,%
  filecolor=blue,%
  menucolor=blue,%
  urlcolor=blue,%
  bookmarksopen=true,%
  bookmarksdepth=2,%
  pageanchor=true}

\makeatletter
\@namedef{subjclassname@2020}{\textup{2020} Mathematics Subject Classification}
\makeatother


\numberwithin{equation}{section}

\swapnumbers

\theoremstyle{plain}
\newtheorem{theorem}[equation]{Theorem}
\newtheorem*{main}{Theorem}
\newtheorem{proposition}[equation]{Proposition}
\newtheorem{lemma}[equation]{Lemma} 
\newtheorem{corollary}[equation]{Corollary}

\theoremstyle{definition}

\newtheorem{chunk}[equation]{}

\theoremstyle{remark}

\newtheorem{question}[equation]{Question}
\newtheorem*{ack}{Acknowledgements}


\newcommand{\ext}{\operatorname{Ext}}
\newcommand{\domdim}{\operatorname{dom\,dim}}
\newcommand{\codomdim}{\operatorname{codom\,dim}}
\newcommand{\fpdim}{\operatorname{fp\,dim}}
\newcommand{\fidim}{\operatorname{fi\,dim}}
\newcommand{\ginjdim}{\operatorname{Ginj\, dim}}
\newcommand{\gpdim}{\operatorname{Gproj\, dim}}
\newcommand{\gldim}{\operatorname{gl\,dim}}
\newcommand{\injdim}{\operatorname{inj\,dim}}
\newcommand{\pdim}{\operatorname{proj\,dim}}
\newcommand{\Hom}{\operatorname{Hom}}
\newcommand{\RHom}{\operatorname{RHom}}
\newcommand{\rmod}{\operatorname{mod}}
\newcommand{\op}[1]{{#1}^{\mathrm{op}}}

\newcommand{\lra}{\longrightarrow}

\title[Jacobson radical]{Homological dimensions of the\\ Jacobson radical}
\author{Xiao-Wu Chen, Srikanth B. Iyengar, and Ren\'{e} Marczinzik}

\address{Xiao-Wu Chen\\
School of Mathematical Sciences\\
University of Science and Technology of China\\
Hefei 230026\\
Anhui, PR China}

\address{Srikanth B. Iyengar\\ 
Department of Mathematics\\
University of Utah\\ 
Salt Lake City, UT 84112\\ 
U.S.A.}

\address{Ren\'{e} Marczinzik\\ 
Mathematical Institute of the University of Bonn\\ 
University of Bonn\\ 
Endenicher Allee 60, 53115 Bonn \\
Germany.}

\begin{document}

\begin{abstract} 
This work presents results on the finiteness, and on the symmetry properties, of various homological dimensions associated to the Jacobson radical and its higher syzygies, of a semiperfect ring. 
\end{abstract}

\keywords{Jacobson radical, Gorenstein ring, injective dimension, semiperfect noetherian ring}

\subjclass[2020]{16E10 (primary); 13D05 (secondary)}

\date{\today}

\maketitle

\section{Introduction}
The focus of this work is on the homological properties of the Jacobson radical, and its higher syzygies, over a semiperfect noetherian ring. Fix such a ring $A$, with Jacobson radical $J$; to avoid trivial considerations, we assume  $A$ is not semisimple, equivalently, $J\ne 0$.  This family contains local rings, and also the class of  finite algebras over commutative noetherian semilocal rings that are complete with respect to the $J$-adic topology, and, in particular, Artin algebras.

It is well-understood that the homological invariants of the $A$-module $A_0\colonequals A/J$ capture properties of the ring $A$ itself. As far as invariants derived from projective resolutions are concerned,  the same is true also of $J$, for it is the first syzygy module of $A_0$.  For instance since $A$ is not semisimple one has
\[
\pdim_AJ = \pdim_AA_0-1 = \gldim A - 1\,.
\]
Therefore the projective dimension of $J$ is finite if and only if  the projective dimension of $A_0$ is finite, and this holds precisely when $A$ has finite global dimension.   The same holds for the higher syzygies of $A_0$. The situation is different for invariants derived from injective resolutions for one expects the properties of $A$ to intervene. For instance, the finiteness of $\injdim_AJ$ does not, a priori, imply that of $\injdim_AA_0$, unless $A$ itself has finite injective dimension. Nevertheless we prove:

\begin{main}[see~\ref{thm:inj-dim}]
\label{main:injdim}
For any semiperfect noetherian ring $A$ one has 
\[
\injdim_AJ =\gldim A\,.
\]
\end{main}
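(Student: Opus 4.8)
The plan is to prove the two inequalities separately. The bound $\injdim_A J \le \gldim A$ is immediate: if $\gldim A = \infty$ there is nothing to check, and if $\gldim A = n < \infty$ then every $A$-module, $J$ included, has injective dimension at most $n$. So the content lies in $\gldim A \le \injdim_A J$, and we may assume $d \colonequals \injdim_A J$ is finite and must show $\gldim A \le d$. Throughout I would use the identity $\gldim A = \pdim_A A_0 = \injdim_A A_0$; only the last equality needs comment, and it holds because for a simple module $S$ with minimal projective resolution $P_\bullet \to S$ one has $\ext^i_A(S, A_0) = \Hom_A(P_i/JP_i, A_0) \ne 0$ whenever $i \le \pdim_A S$, so $\injdim_A A_0 \ge \sup_S \pdim_A S = \gldim A$, the reverse inequality being trivial.

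Next I would reduce to a statement about $A$ itself. Applying $\Hom_A(M,-)$ to $0 \to J \to A \to A_0 \to 0$ for an arbitrary module $M$ and using $\ext^{d+1}_A(M, J) = 0$ together with $\ext^{d+2}_A(M, J) \cong \ext^{d+1}_A(\Omega M, J) = 0$ (both from $\injdim_A J \le d$), the long exact sequence yields an isomorphism $\ext^{d+1}_A(M, A) \cong \ext^{d+1}_A(M, A_0)$. Consequently $\injdim_A A_0 \le d$, and hence the theorem, will follow as soon as $\ext^{d+1}_A(M, A) = 0$ for every $M$, that is, as soon as
\[
\injdim_A A \ \leq\ \injdim_A J .
\]
(Here $d \ge 1$, since $J$ cannot be injective: otherwise the sequence splits, giving $A \cong J \oplus A_0$ as left modules with $A_0$ semisimple, whence $J = \operatorname{rad}({}_A A) = J^2$ and Nakayama forces $J = 0$, contrary to hypothesis.) This reduced statement is the real content; it is in fact equivalent to the theorem, so this step reorganizes rather than trivializes the problem.

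To prove $\injdim_A A \le \injdim_A J$ I would induct on $d$, comparing the injective coresolution of $A$ with that of $J$. Let $E$ be an injective envelope of $J$ and $C \colonequals E/J$, so $\injdim_A C = d - 1$; since $E$ is injective, $J \hookrightarrow A$ extends to some $\alpha \colon A \to E$, and comparing $0 \to J \to A \to A_0 \to 0$ with $0 \to J \to E \to C \to 0$ via the snake lemma identifies $\ker\alpha$ with a semisimple submodule of $A_0$ lying in $\operatorname{soc} A$ and meeting $J$ trivially, and $\operatorname{coker}\alpha$ with a quotient of $A_0$ --- both appearing as subquotients of $C$, which has injective dimension $d-1$. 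Since $\operatorname{coker}\alpha = E/\alpha(A)$ is a first cosyzygy of $\alpha(A) \cong A/\ker\alpha$, this should bound $\injdim_A(A/\ker\alpha)$, and then, after correcting by the semisimple summand $\ker\alpha$, bound $\injdim_A A$ and close the induction. Making this precise is the main obstacle: one must control the error terms $\ker\alpha$ and $\operatorname{coker}\alpha$, which record exactly how $\operatorname{soc} A$ sits inside $A$ relative to $J$, and since $A$ is only semiperfect noetherian rather than artinian, socles and injective envelopes behave less rigidly, so this bookkeeping needs care. A useful checkpoint is that $\injdim_A A = \gldim A$ whenever $\gldim A < \infty$ --- in a minimal resolution of $A_0$ of length $n = \gldim A$ the top differential $P_n \hookrightarrow P_{n-1}$ is not a split monomorphism, so $\ext^n_A(A_0, P_n) \ne 0$ and hence $\ext^n_A(A_0, A) \ne 0$; the genuinely hard case is $\gldim A = \infty$, which, by the isomorphism of the previous paragraph, means $\ext^i_A(A_0, A) \ne 0$ for arbitrarily large $i$ while $\ext^i_A(A_0, J)$ vanishes for $i \gg 0$, and it is precisely this scenario that the induction must rule out.
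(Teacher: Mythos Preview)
Your reduction in the second paragraph is correct, but as you yourself observe, the statement $\injdim_A A \le \injdim_A J$ is equivalent to the theorem, so nothing has been gained yet. The real work is deferred to the third paragraph, and there the argument does not go through. First, the ``induction on $d$'' has no inductive hypothesis to invoke: the cosyzygy $C=E/J$ has injective dimension $d-1$, but it is not the Jacobson radical of any ring, so there is no smaller instance of the theorem to appeal to. Second, and more seriously, the bookkeeping you flag as delicate is in fact circular. Being a subquotient of $C$ gives no bound on injective dimension, so you cannot control $\injdim_A\operatorname{coker}\alpha$ this way; and even if you could bound $\injdim_A(A/\ker\alpha)$, passing back to $A$ via the exact sequence $0\to\ker\alpha\to A\to A/\ker\alpha\to 0$ (which you have not shown to split) forces you to control $\injdim_A\ker\alpha$. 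But $\ker\alpha$ is a nonzero semisimple module, so $\injdim_A\ker\alpha=\injdim_A A_0=\gldim A$, which is exactly the quantity you are trying to bound. Your final sentence identifies the crux precisely---one must rule out $\ext^i_A(A_0,A)\ne 0$ for infinitely many $i$ while $\ext^i_A(A_0,J)=0$ eventually---but the cosyzygy comparison does not do this.

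The paper avoids injective resolutions of $A$ altogether. It proves directly, for any finitely generated $M$, that a single vanishing $\ext_A^{d+1}(M,J)=0$ forces $\pdim_A M\le d$: from the long exact sequence of \eqref{equ:can} this vanishing makes $\ext^d_A(M,A)\to\ext^d_A(M,A_0)$ surjective, and chasing this in a minimal projective resolution shows that every map from the $d$th syzygy $K$ of $M$ to a semisimple module factors through a projective; in particular the canonical surjection $K\to K/JK$ does, which over a semiperfect ring forces $K$ to be projective (Lemma~\ref{lem:proj}). Applying this with $M=A_0$ gives $\gldim A=\pdim_A A_0\le\injdim_A J$ immediately. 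The key idea you are missing is this projectivity criterion; it replaces the intractable comparison of injective envelopes by a one--step lifting argument on the projective side.
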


The equality above holds even when $A$ is semisimple, for then both invariants involved are zero. When $A$ is also commutative, the theorem above is contained in the work of Ghosh, Gupta, and Puthenpurakal~\cite{Ghosh/Gupta/Puthenpurakal:2018}.
In \cite{Marczinzik:2020} the third author established  this equality  for various classes of Artin algebras, and conjectured that it  holds for  Artin algebras. The theorem above confirms this hunch.  We deduce it from the more general statement that for any finitely generated $A$-module $M$, one has  $\ext_A^i(M,J)=0$ for $i\gg 0$ if and only if $\pdim_AM$ is finite; see Proposition~\ref{prp:pd}. 

We explore two possible generalizations of the theorem above. One concerns the injective dimension of  $\Omega^n(A_0)$ for $n\ge 2$, the higher syzygies of $A_0$.  What little we could prove is recorded in Proposition~\ref{prp:syz-question} and Theorem~\ref{thm:AG-algebras}; see also the discussion around Question~\ref{question:syz}. The work of Gelinas in \cite{Gelinas:2022} is one motivation for pursuing this line of enquiry.  

The other direction we pursue stems from a symmetry property that is a direct corollary of the theorem above: Since the global dimension of  $A$ equals that of  its opposite ring $\op A$,  the injective dimension of $J$ as a left $A$-module equals its injective dimension as a right $A$-module. It is natural to ask whether this is true for other homological invariants of $J$. The most decisive result we offer in this direction is that when $A$ is a semilocal Noether algebra the Gorenstein projective dimension of $J$  on the left and on the right coincide, and that this number is finite precisely when $A$ is Iwanaga-Gorenstein; see Theorem~\ref{thm:gpd} and Corollary~\ref{cor:gpd}.  The corresponding statement for Gorenstein injective dimensions remains open, except when $A$ is a commutative semilocal noetherian ring; in this case the symmetry property is clear, and the key conclusion is that $\ginjdim_AJ=\injdim A$; see Proposition~\ref{prp:commutative}. When $A$ is an Artin algebra this question is closely connected to the Gorenstein symmetry conjecture; see Proposition~\ref{prp:gsc}.

\begin{ack}
This work is partly supported by  the National Natural Science Foundation of China grants 12131015 and 12161141001 (XWC), the United States
National Science Foundation grant DMS-2001368 (SBI), and a grant from the Deutsche Forschungsgemeinschaft, project number 428999796 (RM).
The authors thank the Mathematisches Forschungs Institut, Oberwolfach, for hosting us during a workshop during which this project was born.
\end{ack}

\section{Semiperfect noetherian rings}
Throughout $A$ is a noetherian ring with a unit and $J$  its Jacobson radical. The standing hypothesis is that $A$ is 
\emph{semiperfect}, that is to say, each finitely generated $A$-module (either left or right) has a projective cover. This class of rings includes Artin algebras, local rings, and algebras finite over commutative noetherian complete semilocal rings; see \cite[\S23 and \S24]{Lam:1991}.

Unless stated otherwise, we consider only left modules. The \emph{top} of an $A$-module $M$ is the quotient module $M_0\colonequals M/JM$. 
In what follows the following exact sequence of $A$-modules is required often:
\begin{align}
\label{equ:can}
0\lra J \lra A \xrightarrow{\ \pi\ } A_0\lra 0\,.
\end{align}
Note that $A_0$ is a semisimple ring.

\begin{lemma} 
\label{lem:proj}
Let $A$ be a semiperfect ring and $M$ a finitely generated $A$-module. If the surjection $M\rightarrow M_0$ factors through a projective $A$-module, then $M$ is projective.
\end{lemma}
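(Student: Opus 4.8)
The plan is a short lifting argument; it uses only that $A$ is noetherian, so that finitely generated modules over it are noetherian modules, together with Nakayama's lemma. (In particular the semiperfect hypothesis plays no role in this particular statement.)

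Write $\pi_M\colon M\to M_0$ for the canonical surjection, with $\ker\pi_M=JM$. By hypothesis $\pi_M$ factors as $M\xrightarrow{\ f\ }P\xrightarrow{\ g\ }M_0$ with $P$ projective. Since $P$ is projective and $\pi_M$ is surjective, I would lift $g$ along $\pi_M$ to obtain $h\colon P\to M$ with $\pi_M\circ h=g$. The endomorphism $hf$ of $M$ then satisfies $\pi_M\circ(hf)=(\pi_M\circ h)\circ f=g\circ f=\pi_M$, so $(hf-\mathrm{id}_M)(M)\subseteq\ker\pi_M=JM$; hence $M=(hf)(M)+JM$, and Nakayama's lemma (applicable because $M$ is finitely generated) gives that $hf$ is surjective. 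A surjective endomorphism of a noetherian module is bijective, so $hf$ is an automorphism of $M$. Consequently $f$ is a split monomorphism, $M$ is a direct summand of the projective module $P$, and thus $M$ is projective.

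I do not expect a genuine obstacle here: this is the standard ``lift and apply Nakayama'' maneuver. The only steps meriting a little care are the two appeals to finiteness — Nakayama's lemma, and the fact that a surjective endomorphism of a noetherian module is injective — both legitimate since $M$ is finitely generated over the noetherian ring $A$.
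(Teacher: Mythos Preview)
Your argument is correct and takes a genuinely different route from the paper's. The paper exploits the semiperfect hypothesis directly: it takes a projective cover $p\colon P\to M_0$, redirects the given factorization through $p$ (any map from a projective to $M_0$ lifts along the surjection $p$), and then uses that $\ker p$ is superfluous in $P$ to conclude that the resulting map $s\colon M\to P$ is surjective, hence a split epimorphism; a comparison of tops then forces $\ker s=0$. Your lift-and-Nakayama manoeuvre avoids projective covers altogether, but the remark that the semiperfect hypothesis ``plays no role'' is a little misleading: you have not removed a hypothesis so much as traded it for the noetherian one, which is what makes the step ``a surjective endomorphism of a noetherian module is bijective'' go through (this fails for finitely generated modules over general noncommutative rings). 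Both hypotheses are standing in this section, so either proof is legitimate here; the paper's version, however, establishes the lemma exactly as stated---semiperfect, with no appeal to noetherianity---whereas yours proves the variant with ``noetherian'' in place of ``semiperfect''.
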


\begin{proof}
Let $t\colon M\to M_0$ denote the natural surjection and let $p\colon P\to M_0$ be a projective cover of $M_0$, which exists because $A$ is semiperfect. The hypothesis implies that $t$ factors through $p$ so there is an $A$-module morphism $s\colon M\rightarrow P$ satisfying $t=p\circ s$. Since $p\circ s$ is surjective and $p$ is a projective cover,  $s$ is  surjective, and hence a split epimorphism. Since $s$ induces an isomorphism between the tops $M_0$ and $P_0$,  it is  an isomorphism.
\end{proof}

\begin{proposition}
\label{prp:pd}
Let $A$ be a semiperfect noetherian ring, $M$ a finitely generated $A$-module, and $d$ a nonnegative integer. The following conditions are equivalent:
\begin{enumerate}[\quad\rm(1)]
\item $\pdim_AM\leq d$;
\item $\ext_A^{d+1}(M, J)=0$;
\item $\ext_A^d(M, \pi)\colon \ext_A^d(M, A)\to \ext_A^d(M, A_0)$ is surjective.
\end{enumerate}
Consequently one has equalities
\begin{align*}
\pdim_AM &=\inf\{i\ge 0 \mid \ext_A^{i+1}(M, J)=0\} \\
	&=\sup\{0,i \mid \ext_A^{i}(M, J)\ne 0\}\,.
\end{align*}
\end{proposition}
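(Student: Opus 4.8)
The plan is to prove the cycle of implications (1) $\Rightarrow$ (2) $\Rightarrow$ (3) $\Rightarrow$ (1), and then read off the two displayed equalities as formal consequences. The implication (1) $\Rightarrow$ (2) is immediate: if $\pdim_A M \le d$ then $\ext_A^{d+1}(M, N) = 0$ for every $A$-module $N$, in particular for $N = J$. For (2) $\Rightarrow$ (3), I would apply $\Hom_A(M, -)$ to the canonical exact sequence \eqref{equ:can} and examine the resulting long exact sequence in $\ext_A^*(M, -)$: the relevant portion is
\[
\ext_A^d(M, A) \xrightarrow{\ \ext_A^d(M,\pi)\ } \ext_A^d(M, A_0) \lra \ext_A^{d+1}(M, J) \lra \ext_A^{d+1}(M, A)\,.
\]
Exactness shows that $\ext_A^d(M,\pi)$ is surjective precisely when the connecting map $\ext_A^d(M, A_0) \to \ext_A^{d+1}(M, J)$ is zero, and condition (2) forces that target to vanish, giving surjectivity of $\ext_A^d(M,\pi)$. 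So this step is essentially a diagram chase.

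The substantive step is (3) $\Rightarrow$ (1), and this is where I expect the main obstacle to lie. I would argue by induction on $d$. For the base case $d = 0$: surjectivity of $\Hom_A(M, \pi)\colon \Hom_A(M, A) \to \Hom_A(M, A_0)$ means that the natural surjection $t\colon M \to M_0 \hookrightarrow A_0^{(n)}$ — better, I should work with a projective cover picture — lifts through $\pi^{(n)}\colon A^{(n)} \to A_0^{(n)}$, so the map $M \to M_0$ factors through a (finitely generated) projective module $A^{(n)}$; Lemma \ref{lem:proj} then yields that $M$ is projective, i.e.\ $\pdim_A M \le 0$. The care needed here is in choosing the right finite copower of $A_0$ so that $M_0$ embeds and the relevant element of $\Hom_A(M, A_0^{(n)})$ is the composite $M \twoheadrightarrow M_0 \hookrightarrow A_0^{(n)}$; since $A$ is semiperfect and $M$ finitely generated, $M_0$ is a finite semisimple module and this is unproblematic, but it must be spelled out. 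For the inductive step, take a short exact sequence $0 \to \Omega \to P \to M \to 0$ with $P$ finitely generated projective (possible as $A$ is semiperfect noetherian, so $\Omega$ is again finitely generated). Dimension shifting along \eqref{equ:can} and this sequence gives $\ext_A^{d}(M, -) \cong \ext_A^{d-1}(\Omega, -)$ in the relevant range, so condition (3) for $M$ at level $d$ translates into condition (3) for $\Omega$ at level $d-1$; by induction $\pdim_A \Omega \le d-1$, whence $\pdim_A M \le d$.

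Finally, the two equalities follow from the equivalence of (1) and (2): the set $\{i \ge 0 \mid \ext_A^{i+1}(M,J) = 0\}$ is, by (1) $\Leftrightarrow$ (2), exactly $\{i \ge 0 \mid \pdim_A M \le i\}$, whose infimum is $\pdim_A M$ (interpreting $\inf \emptyset = \infty$ when $\pdim_A M = \infty$, consistent with the convention); and the second equality is the complementary reformulation, noting that $\ext_A^0(M, J) = \Hom_A(M,J)$ may be nonzero even when $M$ is projective, which is why the supremum is taken together with $0$. The only delicate point in the whole argument is making the base case of the induction airtight — correctly identifying the factorization hypothesis of Lemma \ref{lem:proj} from surjectivity of $\Hom_A(M,\pi)$ on a suitable copower — and ensuring the dimension-shifting isomorphisms in the inductive step are set up so that hypothesis (3) transfers cleanly from $M$ to $\Omega$.
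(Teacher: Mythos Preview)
Your proof is correct and follows the same strategy as the paper: the implications (1)$\Rightarrow$(2)$\Rightarrow$(3) are handled identically, and for (3)$\Rightarrow$(1) both arguments come down to applying Lemma~\ref{lem:proj} to a syzygy of $M$. The paper, however, dispenses with the induction on $d$: it passes directly to the $d$-th syzygy $K$, uses the description of $\ext_A^d(M,A_0)$ as maps $K\to A_0$ modulo those extending to $P_{d-1}$, and reads off from surjectivity of $\ext_A^d(M,\pi)$ that every such map factors through the projective module $A\oplus P_{d-1}$; hence $K\to K_0$ does too, and Lemma~\ref{lem:proj} applies. This direct route sidesteps the one soft spot in your induction, namely the passage from $d=1$ to $d=0$: there dimension shifting yields only a surjection $\Hom_A(\Omega,-)\twoheadrightarrow\ext_A^1(M,-)$, not an isomorphism, so transferring condition~(3) from $M$ to $\Omega$ is not automatic and requires an extra diagram chase (using that $\Hom_A(P,\pi)$ is onto since $P$ is projective). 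You flagged this step as needing care, and it is indeed easily patched, but the paper's one-shot argument at the $d$-th syzygy avoids the issue entirely.
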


\begin{proof}
The implication (1)$\Rightarrow$(2) is trivial, whereas (2)$\Rightarrow$(3) is immediate once we apply $\Hom_A(M, -)$ to \eqref{equ:can}.  
The semiperfection of $A$ is not relevant so far but is used in proving (3)$\Rightarrow$(1), for that condition implies that $M$ has a minimal projective resolution. Truncating after the first $d$ steps in such a resolution yields a complex
\[
0\lra K\xrightarrow{\ \iota\ } P_{d-1} \lra \cdots \lra P_1 \lra  P_0\lra  M\lra 0\,,
\]
with each $P_i$ a finitely generated projective, and $K$ the $d$-th syzygy of $M$.  Thus an element in $\ext_A^d(M, A_0)$ is represented by an $A$-module morphism $\xi\colon K\to A_0$.

For any such $\xi$ the surjectivity of $\ext_A^d(M, \pi)$ means that there exists some $A$-module morphism $\nu \colon K\rightarrow A$ such that $\xi$ and $\pi \circ \nu$ represent the same element in $\ext_A^d(M, A_0)$. In other words, $\xi-\pi\circ \nu$ factors through $\iota\colon K\rightarrow P_{d-1}$. It follows that $\xi$ factors through the projective module $A\oplus P_{d-1}$. Then any $A$-module morphism $K\rightarrow S$, with $S$ a semisimple module,  factors through a projective module. In particular, the canonical projection $K\rightarrow K_0$ factors through a projective module, so Lemma~\ref{lem:proj} implies $K$ is projective, as desired.
\end{proof}

We record a couple of remarks concerning the preceding result.

\begin{chunk}
\label{ch:gl=pd}
As $A$ is semiperfect and noetherian $\pdim_A(A_0)=\gldim A $; see \cite[Theorem~12]{Eilenberg:1956}, also \cite[Proposition~2.2]{Ingalls/Paquette:2017}. Assume $\gldim A =\infty$, so $\pdim_A(A_0)=\infty$. Proposition~\ref{prp:pd} implies that for any integer $d\ge 0$, the map
\[
\ext_A^d(A_0, \pi)\colon \ext_A^d(A_0, A) \lra \ext_A^d(A_0, A_0)
\]
is not surjective, which adds to the well-known fact that $\ext_A^d(A_0, A_0)\neq 0$.
\end{chunk}

\begin{chunk}
\label{ch:complexes}
When $M$ is an $A$-complex with $\mathrm{H}_*(M)$ finitely generated, the projective dimension of $M$---see \cite[Definition 2.1.P]{Avramov/Foxby:1991}---can be calculated from the vanishing of $\ext_A^i(M,J)$ as in   Proposition~\ref{prp:pd}, with the proviso that $d 
\ge \sup\{i\mid \mathrm{H}_i(M)\ne 0\}$ holds. In particular, $M$ is quasi-isomorphic to a bounded complex of finitely generated projective modules if and only if $\ext^i_A(M,J)=0$ for $i\gg 0$.
\end{chunk}

\subsection*{Injective dimension}
Let $\op A$ denote the opposite algebra of $A$; thus when $M$ is an $A$-bimodule $\injdim_{\op A}(M)$ is the injective dimension of $M$ as a right $A$-module.

\begin{theorem}
\label{thm:inj-dim}
For any semiperfect noetherian ring $A$ there is an equality
\[
\injdim_AJ=\gldim A \,.
\]
In particular $\injdim_AJ=\injdim_{\op A}J$.
\end{theorem}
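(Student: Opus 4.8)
The plan is to read off both inequalities from Proposition~\ref{prp:pd}, applied to the module $A_0$, together with the identity $\pdim_A A_0 = \gldim A$ recorded in~\ref{ch:gl=pd}; essentially all of the content already sits in Proposition~\ref{prp:pd}, so what remains is short. We may as well assume $J \ne 0$: if $A$ is semisimple then $\gldim A = 0$ and $J = 0$, and the asserted equality holds (with the convention that the zero module has injective dimension $0$).

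First I would dispose of the inequality $\injdim_A J \le \gldim A$. This is vacuous when $\gldim A = \infty$, and when $\gldim A = n$ is finite one has $\pdim_A M \le n$ for every $A$-module $M$, hence $\ext_A^{n+1}(M, N) = 0$ for all $M$ and $N$; taking $N = J$ gives $\injdim_A J \le n$.

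For the reverse inequality I would use that, by the definition of injective dimension, $\ext_A^i(A_0, J) = 0$ for all $i > \injdim_A J$, whereas $\injdim_A J \ge 0$ since $J \ne 0$. Hence every element of the set $\{0, i \mid \ext_A^i(A_0, J) \ne 0\}$ is at most $\injdim_A J$, so
\[
\pdim_A A_0 = \sup\{0, i \mid \ext_A^i(A_0, J) \ne 0\} \le \injdim_A J \,,
\]
the first equality being the concluding assertion of Proposition~\ref{prp:pd}. Since $\pdim_A A_0 = \gldim A$, combining this with the previous paragraph yields $\injdim_A J = \gldim A$, uniformly whether or not this quantity is finite. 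For the last sentence of the statement I would invoke the classical fact that a two-sided noetherian ring and its opposite have equal global dimension, observe that $\op A$ is again semiperfect and noetherian with the same Jacobson radical $J$, and apply the equality just proved to $\op A$, so that $\injdim_{\op A} J = \gldim \op A = \gldim A = \injdim_A J$.

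I expect no genuine obstacle here; the real work was carried out in Proposition~\ref{prp:pd}. The only points worth a line of care are the degenerate semisimple case, which forces the convention $\injdim 0 = 0$ for the stated equality to be literally true, and keeping straight the direction in which ``$\ext_A^i(A_0,J)$ vanishes above the injective dimension'' is applied, so that it bounds the above supremum from above by $\injdim_A J$ rather than the reverse.
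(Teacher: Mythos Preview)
Your proof is correct and follows essentially the same route as the paper's: apply Proposition~\ref{prp:pd} to the module $A_0$ together with $\pdim_A A_0=\gldim A$ from~\ref{ch:gl=pd}, and conclude the symmetry statement from $\gldim A=\gldim \op A$. The only cosmetic difference is that the paper reads off $\gldim A\le\injdim_A J$ from the $\inf$ formulation in Proposition~\ref{prp:pd}, whereas you use the $\sup$ formulation; these are equivalent.
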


\begin{proof}
As noted in ~\ref{ch:gl=pd}, there is an equality $\gldim A =\pdim_A(A_0)$ so Proposition~\ref{prp:pd} yields
\[
\gldim A =\inf\{i\ge 0\mid \ext_A^{i+1}(A_0, J)= 0\}\,.
\]
Thus $\gldim A \leq \injdim_AJ$. The reverse inequality  is clear. As to the last assertion, it remains to recall that $\gldim A =\gldim(\op A)$, and also that the Jacobson radical of $A$ and $\op A$ coincide.
\end{proof}
 
\subsection*{Gorenstein injective dimension}
Let $A$ be a noetherian ring.  The Gorenstein projective dimension and the Gorenstein injective dimension of an $A$-module $M$ are denoted  $\gpdim_AM$ and $\ginjdim_AM$, respectively; see \cite{Holm:2004} for definitions. 

The ring $A$ is said to be \emph{Iwanaga-Gorenstein} if it is noetherian on both sides, and $\injdim_AA$ and $\injdim_{\op A}A$ are both finite; in this case the injective dimensions are equal, by the theorem of Zaks~\cite[Lemma~A]{Zaks:1969}. The ring $A$ is \emph{$d$-Gorenstein} for some integer $d$, if it is Iwanaga-Gorenstein and $\injdim_AA\leq d$.

\begin{lemma}
\label{lem:Gid}
Let $A$ be a semiperfect noetherian ring and $M$ an $A$-module. The following statements hold.
\begin{enumerate}[\quad\rm(1)]
\item If $\injdim_AM<\infty$, then $\ginjdim_AM=\injdim_AM$.
\item If $A$ is $d$-Gorenstein, then $\ginjdim_AM\leq d$.
\end{enumerate}
\end{lemma}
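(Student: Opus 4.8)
\textbf{Proof plan for Lemma~\ref{lem:Gid}.}

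For part (1), the plan is to invoke the standard comparison between Gorenstein injective dimension and injective dimension: whenever $\ginjdim_AM<\infty$ one has $\ginjdim_AM=\injdim_AM$, a result of Holm~\cite{Holm:2004}. Since $\injdim_AM<\infty$ trivially forces $\ginjdim_AM<\infty$, this gives the claimed equality immediately. I do not expect any obstacle here; it is a direct citation once we observe that our hypotheses on $A$ (noetherian, semiperfect) are more than enough for the relevant results of \cite{Holm:2004} to apply.

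For part (2), the plan is as follows. First recall that over a $d$-Gorenstein ring every module has finite Gorenstein injective dimension; more precisely, the finiteness of $\injdim_A A$ and $\injdim_{\op A}A$ together with the Holm-type machinery yields $\ginjdim_AM<\infty$ for all $M$. The point is then to bound this dimension by $d$. For this I would take an injective resolution and truncate: let $0\to M\to I^0\to\cdots\to I^{d-1}\to C\to 0$ be exact with each $I^j$ injective and $C$ the $d$-th cosyzygy. By dimension-shifting it suffices to show $C$ is Gorenstein injective. Since $\ginjdim_A C<\infty$ by the first step, and since over an Iwanaga-Gorenstein ring the Gorenstein injective modules are exactly the kernels (equivalently, cokernels) of acyclic complexes of injectives that stay acyclic after applying $\Hom_A(I,-)$ for all injective $I$, one checks that a module of finite Gorenstein injective dimension equal to $0$ after truncating past $\injdim_A A$ steps is Gorenstein injective. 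Concretely, one uses that $\ginjdim_A C\le\ginjdim_A M<\infty$ and that for a module $N$ of finite Gorenstein injective dimension one has $\ginjdim_A N=\sup\{i\mid \ext^i_A(L,N)\ne 0\ \text{for some } L\ \text{with}\ \injdim_A L<\infty\}$; over a $d$-Gorenstein ring every module $L$ has $\injdim_A L\le d$ or $\pdim_A L\le d$ appropriately, which pins the supremum down to at most $d$.

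The step I expect to be the main obstacle is making the truncation argument in part (2) precise: one must be careful that ``finite Gorenstein injective dimension plus vanishing of the obstruction past step $d$'' really does imply the cosyzygy is Gorenstein injective, rather than merely of small Gorenstein injective dimension. The cleanest route is probably to avoid cosyzygies altogether and instead cite directly the known fact (see \cite{Holm:2004} and the references therein on Iwanaga-Gorenstein rings, going back to Iwanaga's work) that over a $d$-Gorenstein ring $\ginjdim_A M\le d$ for every module $M$; our contribution is merely to record it in the form we need. If a self-contained argument is wanted, one invokes that $\injdim_A A\le d$ implies every $d$-th cosyzygy of every module has finite injective dimension bounded appropriately after tensoring considerations, or—more robustly—that the class of Gorenstein injectives is the right half of a complete cotorsion pair whose left half consists of modules of finite injective dimension, all of which are bounded by $d$; feeding this into Holm's dimension formula closes the argument.
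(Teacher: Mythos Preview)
Your overall route---cite \cite{Holm:2004} for both parts---is exactly what the paper does: it points to Proposition~2.27 of Holm for (1) and Theorem~2.22 for (2), with no further argument.

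That said, your statement of the cited result in part~(1) is wrong as written. It is \emph{not} true that ``whenever $\ginjdim_A M<\infty$ one has $\ginjdim_A M=\injdim_A M$''; any nonzero Gorenstein injective module that is not injective (and these abound over non-regular Gorenstein rings) has $\ginjdim=0$ and $\injdim=\infty$. Holm's result runs the other way: one always has $\ginjdim_A M\le\injdim_A M$, with equality when $\injdim_A M<\infty$. So your two-step deduction ($\injdim<\infty\Rightarrow\ginjdim<\infty\Rightarrow$ equality) collapses to a direct citation of the correct statement, which is already the content of (1). Fix the quoted implication and your plan for (1) is fine.

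For part~(2), your ``cleanest route'' paragraph (just cite the known bound over $d$-Gorenstein rings) is precisely the paper's approach. Your sketched self-contained arguments, however, contain slips you should not commit to print: it is false that ``over a $d$-Gorenstein ring every module $L$ has $\injdim_A L\le d$'' (that would force finite global dimension), and the claim that ``every $d$-th cosyzygy has finite injective dimension'' is likewise not what holds. The correct mechanism, if you want one, is that $\injdim_A A\le d$ forces $\ext_A^i(I,-)=0$ for $i>d$ and every injective $I$ (since injectives then have projective dimension $\le d$); combined with Holm's characterization of $\ginjdim$ via such Ext-vanishing, this gives the bound. Either cite cleanly, as the paper does, or repair the sketch along these lines.
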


\begin{proof}
Statement (1) can be proved along the lines of \cite[Proposition~2.27]{Holm:2004} while (2) is contained in \cite[Theorem~2.22]{Holm:2004}.
\end{proof}

\begin{proposition}
\label{prop:Gid-lr}
Let $A$ be a semiperfect noetherian ring. When $\gldim A$ is finite there are equalities
\[
\ginjdim_AJ=\injdim_AA=\injdim_{\op A}A=\ginjdim_{\op A}J\,.
\]
\end{proposition}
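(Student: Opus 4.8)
The plan is to reduce everything to the classical fact that, for a noetherian ring $A$ of finite global dimension, one has $\gldim A = \injdim_AA$ (indeed all of $\pdim_A(A_0)$, $\injdim_AA$, $\injdim_{\op A}A$, and $\gldim A = \gldim(\op A)$ agree). The crux is therefore to compare $\ginjdim_AJ$ with $\injdim_AA$ under the hypothesis $\gldim A<\infty$; the right-hand equalities then follow by applying the left-hand ones to $\op A$, noting that $A$ and $\op A$ share the same Jacobson radical and the same (finite) global dimension.

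First I would record the easy inequality. Since $\gldim A<\infty$, every $A$-module has finite injective dimension; in particular $\injdim_AJ<\infty$, so Lemma~\ref{lem:Gid}(1) gives $\ginjdim_AJ=\injdim_AJ$. By Theorem~\ref{thm:inj-dim}, $\injdim_AJ=\gldim A$, and $\gldim A=\injdim_AA$ because $A$ has finite global dimension. Stringing these together yields
\[
\ginjdim_AJ=\injdim_AJ=\gldim A=\injdim_AA\,.
\]
So in fact the proof is shorter than one might fear: no genuinely new Gorenstein-homological input is needed beyond Lemma~\ref{lem:Gid}(1), because finite global dimension forces ordinary and Gorenstein injective dimensions of $J$ to coincide, and Theorem~\ref{thm:inj-dim} already identifies the former with $\gldim A$.

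For the remaining two equalities, apply the argument just given to the ring $\op A$. It is semiperfect and noetherian, its Jacobson radical is again $J$ (viewed now as a right $A$-module, i.e.\ a left $\op A$-module), and $\gldim(\op A)=\gldim A<\infty$. Hence $\ginjdim_{\op A}J=\gldim(\op A)=\injdim_{\op A}A$. Combining with $\gldim A=\gldim(\op A)$ closes the chain of equalities. The only point that requires a moment's care — and the step I would flag as the potential obstacle — is the invocation that $\injdim_AA=\gldim A$ for a noetherian ring of finite global dimension on one side; this is standard (it follows, e.g., from the fact that finite global dimension is right–left symmetric for noetherian rings and that $\gldim A=\pdim_A A_0=\injdim_A A$ in that case), but it should be cited explicitly rather than left implicit, so that the finiteness hypothesis is seen to be used in exactly the right place.
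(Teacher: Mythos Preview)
Your argument is correct and follows essentially the same route as the paper: invoke Lemma~\ref{lem:Gid}(1) to replace $\ginjdim_AJ$ by $\injdim_AJ$, use Theorem~\ref{thm:inj-dim} to identify this with $\gldim A$, and then use the standard equality $\injdim_AA=\gldim A=\injdim_{\op A}A$ for noetherian rings of finite global dimension; applying the same reasoning to $\op A$ closes the chain. Your suggestion to cite the equality $\injdim_AA=\gldim A$ explicitly is a reasonable polish, but otherwise there is nothing to add.
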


\begin{proof}
Since $A$ has finite global dimension one has equalities
\[
\injdim_AA=\gldim A=\injdim_{\op A}A \,.
\]
As $\ginjdim_AJ=\injdim_{A}J$, by Lemma~\ref{lem:Gid}(1), applying Theorem~\ref{thm:inj-dim} yields the desired equalities.
\end{proof}

Based on the results above, we raise the following questions: When $A$ is a semiperfect noetherian ring, do the following equalities hold:
\begin{align}
\label{conj:1}
\ginjdim_AJ&=\injdim_AA, \text{ and } \\
\label{conj:2}
\ginjdim_AJ&=\ginjdim_{\op A}J\,?
\end{align}

It follows from Proposition~\ref{prp:gsc} when $A$ is artinian these equalities are consequences of the  Gorenstein symmetry conjecture. In particular, they hold when $A$ is an artinian Gorenstein ring. Equality \eqref{conj:1} holds when $A$ is a semilocal commutative ring; see Proposition~\ref{prp:commutative}.  The analogue of \eqref{conj:2} for Gorenstein projective dimension is also open. In Theorem~\ref{thm:gpd} we prove it for semilocal Noether algebras.

\subsection*{Higher syzygies of $A_0$}
In what follows we write $\Omega^n(M)$ for the $n$-th syzygy of a finitely generated $A$-module $M$; in particular, $\Omega^0(M)=M$. We can speak of ``the" $n$-th syzygy because it is well-defined, since projective covers exist.

\begin{question}
\label{question:syz}
Let $A$ be a semiperfect noetherian ring. If $\Omega^n(A_0) \neq 0$ is then $\injdim_A\Omega^n(A_0)=\gldim A $?
\end{question}

Gelinas~\cite{Gelinas:2022} proves that the big finitistic injective dimension of an Artin algebra $A$ is bounded above by the least integer $n\ge 0$ for which $\Omega^n(A_0)$ is an $(n+1)$-th syzygy. It is an open problem whether this number is finite for  Artin algebras. It is closely connected to properties of the injective resolution of $\Omega^n(A_0)$.

Question~\ref{question:syz} has a positive answer when $n=0$ since $\injdim_A({A_0})=\gldim A $, by \cite[Theorem~12]{Eilenberg:1956}. The case that $n=1$ is precisely Theorem~\ref{thm:inj-dim}.  We also have a positive answer when $A$ is a commutative local ring; this is by a result of Ghosh, Gupta, and Puthenpurakal~\cite[Theorem~3.7]{Ghosh/Gupta/Puthenpurakal:2018}.  Using the computer algebra package QPA~\cite{QPA} we found that  the  equality in question holds for several thousand quiver algebras.

The following observations give further evidence that Question~\ref{question:syz} has a positive answer; they apply, in particular, when $A$ is a $2$-Gorenstein ring.

\begin{proposition} 
\label{prp:syz-question}
Let $A$ be a semiperfect noetherian ring. 
\begin{enumerate}[\quad\rm(1)]
\item If $\gldim A $ is finite, then $\injdim_A\Omega^d(A_0)=\gldim A$ for $d\colonequals \gldim A$.
\item If $A$ is Gorenstein and of infinite global dimension, then  
\[
\injdim_A\Omega^n(A_0)=\infty\quad\text{for  any $n\geq 0$.}
\]
\item If $A$ has finitistic injective dimension at most one, then 
\[
\injdim_A\Omega^n(A_0)=\gldim A \quad\text{whenever $\Omega^n(A_0)\neq 0$.}
\]
\end{enumerate}
\end{proposition}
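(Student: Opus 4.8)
The plan is to extract one elementary lemma about minimal projective resolutions and then read off all three parts from it. The lemma I would prove is: \emph{if $n\ge 1$ and $\Omega^n(A_0)\neq 0$, then $\ext_A^n(A_0,\Omega^n(A_0))\neq 0$; in particular $\injdim_A\Omega^n(A_0)\ge n$.} To prove this, take a minimal projective resolution $\cdots\to P_1\to P_0\to A_0\to 0$ and consider the canonical exact sequence $0\to\Omega^n(A_0)\to P_{n-1}\to\Omega^{n-1}(A_0)\to 0$. Here $P_{n-1}$ is a projective cover of $\Omega^{n-1}(A_0)$, so this sequence splits if and only if $\Omega^{n-1}(A_0)$ is projective, and this happens if and only if $\Omega^n(A_0)=0$ (the cover map $P_{n-1}\to\Omega^{n-1}(A_0)$ is an isomorphism precisely when its kernel vanishes). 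Thus $\Omega^n(A_0)\neq 0$ forces the sequence to be non-split, so its class in $\ext_A^1(\Omega^{n-1}(A_0),\Omega^n(A_0))$ is nonzero; dimension shifting along the resolution of $A_0$ identifies this group with $\ext_A^n(A_0,\Omega^n(A_0))$, and the asserted inequality $\injdim_A\Omega^n(A_0)\ge n$ follows. (Equivalently, one checks via Nakayama's lemma, using $\Omega^n(A_0)\subseteq JP_{n-1}$, that the surjection $P_n\twoheadrightarrow\Omega^n(A_0)$ is a cocycle that is not a coboundary in $\Hom_A(P_\bullet,\Omega^n(A_0))$.) I regard this lemma as the only step with real content; the rest is bookkeeping, and the point to execute carefully is the non-splitting argument, treating $n=1$ on the same footing as $n\ge 2$ through the equivalence ``$\Omega^{n-1}(A_0)$ projective $\iff\Omega^n(A_0)=0$''.

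For (1): when $d\colonequals\gldim A$ is finite one has $d=\pdim_A A_0$, so $\Omega^d(A_0)\neq 0$, and the lemma gives $\injdim_A\Omega^d(A_0)\ge d$; since the global dimension bounds every injective dimension, $\injdim_A\Omega^d(A_0)\le\gldim A=d$, so equality holds. For (2): since $A$ is Iwanaga-Gorenstein, $e\colonequals\injdim_A A=\injdim_{\op A}A$ is finite and every projective $A$-module has injective dimension $\le e$. From the canonical sequences one gets the estimate $\injdim_A\Omega^{n-1}(A_0)\le\max\{\injdim_A P_{n-1},\ \injdim_A\Omega^n(A_0)-1\}\le\max\{e,\ \injdim_A\Omega^n(A_0)-1\}$; hence if $\injdim_A\Omega^n(A_0)$ were finite for some $n$, descending this estimate would make $\injdim_A A_0$ finite, i.e. $\gldim A$ finite, contrary to hypothesis. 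So $\injdim_A\Omega^n(A_0)=\infty$ for all $n$. (Alternatively one can invoke the standard fact that over an Iwanaga-Gorenstein ring a finitely generated module has finite injective dimension if and only if it has finite projective dimension, together with $\pdim_A\Omega^n(A_0)=\infty$.)

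For (3): the case $n=0$ is $\injdim_A A_0=\gldim A$ and the case $n=1$ is $\injdim_A J=\gldim A$ (Theorem~\ref{thm:inj-dim}, since $\Omega^1(A_0)=J$), both already known. Now let $n\ge 2$ with $\Omega^n(A_0)\neq 0$. The lemma gives $\injdim_A\Omega^n(A_0)\ge n\ge 2$; since $\fidim A\le 1$ means every finitely generated $A$-module of finite injective dimension has injective dimension $\le 1$, it follows that $\injdim_A\Omega^n(A_0)=\infty$. It remains to see $\gldim A=\infty$. If not, then $d\colonequals\gldim A=\pdim_A A_0$ is finite and, since $\Omega^n(A_0)\neq 0$, satisfies $d\ge n\ge 2$; but then $\Omega^d(A_0)$ is a nonzero finitely generated module with $\injdim_A\Omega^d(A_0)\ge d\ge 2$ by the lemma while also $\injdim_A\Omega^d(A_0)\le\gldim A<\infty$, contradicting $\fidim A\le 1$. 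Hence $\gldim A=\infty=\injdim_A\Omega^n(A_0)$, as claimed.
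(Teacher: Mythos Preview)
Your proof is correct and follows essentially the same route as the paper: the paper also hinges on the observation that $\ext_A^n(A_0,\Omega^n(A_0))\neq 0$ whenever $\Omega^n(A_0)\neq 0$, then dispatches (1) and (3) exactly as you do, and for (2) invokes directly the Gorenstein fact you list as your alternative. The only cosmetic differences are that you spell out and prove the key non-vanishing lemma (the paper merely asserts it), and in (3) you argue for $\gldim A=\infty$ via a separate contradiction rather than simply noting $\injdim_A\Omega^n(A_0)\le\gldim A$.
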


\begin{proof}
For (1), it suffices to observe  $\pdim_A(A_0)=\gldim A$ and that
\[
\ext_A^d(A_0, \Omega^d(A_0))\neq 0\quad\text{for $d\colonequals\gldim A$.}
\]

(2) Each $\Omega^n(A_0)$ has infinite projective dimension, since $\gldim A$ is infinite. It remains to recall that since $A$ is Gorenstein ring, a finitely generated $A$-module has finite projective dimension if and only if it has finite injective dimension.

(3) We have already observed that the stated equality holds for $n=0, 1$. Assume $n \geq 2$ and that $\Omega^n(A_0)\neq 0$. Since $\ext^i_{A}(A_0, \Omega^i(A_0)) \neq 0$ for each $i$, the injective dimension of $\Omega^n(A_0)$ is at least $n$. Since the finitistic injective dimension is at most one, we infer  $\injdim_A\Omega^n(A_0)=\infty$, which gives the desired equality.
\end{proof}

\section{semilocal noether algebras}
Throughout this section $R$ is a commutative noetherian ring, and $A$ a finite $R$-algebra; in particular, $A$ is noetherian on both sides. 
We call such an $A$ a \emph{Noether} algebra, or a Noether $R$-algebra, if the ring $R$ is to emphasized. The focus is on the case when $R$ is semilocal; then so is $A$; see \cite[Proposition~20.6]{Lam:1991}.

\begin{chunk}
\label{ch:gor-test}
It follows from \cite[Corollary~6.11]{Beligiannis:2000}, see also \cite[Theorem~1.4]{Huang/Huang:2012}, that when $A$ a two-sided noetherian ring and $d\ge 0$ an integer, the following conditions are equivalent:
\begin{enumerate}[\quad\rm(1)]
\item
$A$ is $d$-Gorenstein;
\item
$\gpdim_AM\le d$ for each finitely generated $A$-module $M$;
\item
$\gpdim_AN\le d$ for each finitely generated $\op A$-module $N$.
\end{enumerate}
\end{chunk}

When $A$ is a semilocal Iwanaga-Gorenstein algebra $A$ one has
\[
\gpdim_A(A/J)=\injdim_AA= \gpdim_{\op A}(A/J)\,,
\]
where $J$ is the Jacobson radical of $A$. The result below is a converse. 

\begin{theorem}
\label{thm:gpd}
Let $A$ be a semilocal Noether algebra, with Jacobson radical $J$, and $d\ge 0$ an integer. The following conditions are equivalent
\begin{enumerate}[\quad\rm(1)]
\item The algebra $A$ is $d$-Gorenstein;
\item $\gpdim_A(A/J)\le d$;
\item $\gpdim_{\op A}(A/J)\le d$.
\end{enumerate}
\end{theorem}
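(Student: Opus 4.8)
The implication $(1) \Rightarrow (2)$ is already recorded: if $A$ is $d$-Gorenstein, then $\gpdim_A M \le d$ for every finitely generated $A$-module $M$ by~\ref{ch:gor-test}, and in particular for $M = A/J$. Since the hypotheses are left-right symmetric (a semilocal Noether $R$-algebra has $\op A$ also a semilocal Noether $R$-algebra, and being $d$-Gorenstein is a two-sided condition), the equivalence $(1) \Leftrightarrow (3)$ follows from $(1) \Leftrightarrow (2)$ applied to $\op A$, whose Jacobson radical is again $J$. So the whole content is the implication $(2) \Rightarrow (1)$.

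To prove $(2) \Rightarrow (1)$, I would argue as follows. Assume $\gpdim_A(A/J) \le d$. First reduce to showing $\injdim_A A \le d$ and $\injdim_{\op A} A < \infty$; by Zaks' lemma the two injective dimensions are then equal, so $A$ is $d$-Gorenstein. The key point is that finiteness of $\gpdim_A(A/J)$ should force finiteness of $\gpdim_A M$ for \emph{every} finitely generated $A$-module $M$: this is the standard phenomenon that the residue module (or, here, $A/J$, whose summands are all the simple modules) is a ``test module'' for Gorenstein-type conditions. Concretely, over a semiperfect ring every finitely generated module $M$ sits in a filtration whose subquotients are the simples, i.e.\ the summands of $A/J$; one then uses that the class of modules of finite Gorenstein projective dimension is closed under extensions (and the relevant syzygy manipulations) to propagate finiteness of $\gpdim$ from $A/J$ up to all of $\rmod A$. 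Once $\gpdim_A M < \infty$ for all finitely generated $M$, invoke~\ref{ch:gor-test}: the equivalence there says $A$ is $d'$-Gorenstein for some $d'$, and then a separate bookkeeping argument pins the number down to $d$. For the bound $d' \le d$: if $\gpdim_A(A/J) \le d$ then $\Omega^d(A/J)$ is Gorenstein projective, and since the $d$-th syzygy of $A/J$ "contains" (up to summands, via the bimodule structure and the fact that $A$ is a Noether algebra) enough information, one deduces the Auslander-type vanishing $\ext^{>d}_A(A/J, A) = 0$, hence $\injdim_A A \le d$ by a standard argument using that $A/J$ generates.

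The main obstacle, I expect, is making precise the passage "finite $\gpdim$ of $A/J$ implies finite $\gpdim$ of all finitely generated modules," and then sharpening "finite" to "$\le d$". The first half is believable from the filtration/extension-closure argument, but one must be careful that Gorenstein projective dimension is genuinely finite (not just that syzygies are eventually in the right subcategory) — this is where the Noether algebra hypothesis, giving a well-behaved duality and the result of Beligiannis/Huang--Huang quoted in~\ref{ch:gor-test}, does the heavy lifting. The second half, getting the sharp bound $d$ rather than some larger $d'$, requires relating $\gpdim_A(A/J)$ to $\injdim_A A$ directly; I would do this via the characterization that $\gpdim_A(A/J) \le d$ is equivalent to $\Omega^d(A/J)$ being Gorenstein projective, combined with the fact that $\ext^i_A(A/J, A) = 0$ for $i > d$ forces $\injdim_A A \le d$ because $A/J$ is a cogenerator-type test module over a semilocal ring. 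Finally, to get $\injdim_{\op A} A < \infty$ so that Zaks applies, I would either run the symmetric argument or appeal to the fact that over a Noether algebra finiteness of one-sided injective dimension of $A$ already forces the other to be finite (which is itself part of the Beligiannis circle of ideas in~\ref{ch:gor-test}).
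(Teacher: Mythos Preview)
Your overall architecture matches the paper's: reduce to $(2)\Rightarrow(1)$ by symmetry, deduce $\injdim_AA\le d$ from $\ext^{>d}_A(A/J,A)=0$, and then show that every finitely generated $A$-module has finite Gorenstein projective dimension so that~\ref{ch:gor-test} applies. The bound-sharpening step is also essentially what the paper does: once $\gpdim_AM<\infty$ one has $\gpdim_AM=\sup\{i\mid\ext^i_A(M,A)\ne 0\}\le\injdim_AA\le d$.

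The genuine gap is in your proposed mechanism for propagating finite $\gpdim$ from $A/J$ to an arbitrary finitely generated $M$. You write that ``over a semiperfect ring every finitely generated module $M$ sits in a filtration whose subquotients are the simples,'' and then appeal to extension-closure. But this is false in general: a semilocal Noether algebra need not be artinian, so finitely generated modules need not have finite length. Already for $A=R$ a commutative noetherian local ring of positive Krull dimension, the regular module $A$ has no finite filtration by simples. Your filtration argument therefore only establishes the base case $\dim_RM=0$.

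This is exactly the obstacle the paper's proof is built around. After handling finite-length modules by induction on length (your argument), the paper runs an induction on the Krull dimension $\dim_RM$: strip off the $\mathfrak m$-power torsion (finite length, hence done), reduce to the case where some $r\in\mathfrak m$ is a nonzerodivisor on $M$, and then compare the biduality map $\theta(M)\colon M\to\RHom_{\op A}(\RHom_A(M,A),A)$ with $\theta(M/rM)$ via the Koszul complex on $r$. The induction hypothesis makes $\theta(M/rM)$ a quasi-isomorphism, so multiplication by $r$ is an isomorphism on $\mathrm{H}_*(\mathrm{cone}(\theta(M)))$, and Nakayama's lemma kills the cone. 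This Krull-dimension d\'evissage is the missing idea in your plan; without it the argument does not leave the artinian case.
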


\begin{proof}
It suffices to prove that conditions (1) and (2) are equivalent. Since (1)$\Rightarrow$(2) is known,  only the converse is moot.    Given \ref{ch:gor-test}, it suffices to prove 
\[
\gpdim_AM\le d\quad\text{for each $M$ in $\rmod A$.}
\]
Since $A/J$ is a direct sum of simple $A$-modules, and each simple $A$-module occurs in the sum, up to isomorphism, one gets that
\[
\gpdim_Ak\le d \quad\text{for each simple $A$-module $k$.}
\]
In particular, $\ext^i_A(k,A) = 0$ for each such $k$ and $i\ge d+1$ and hence $\injdim_A A\le d$, by \cite[Lemma B.3.1]{Buchweitz:2021}. It thus suffices to prove that $\gpdim_AM$ is finite for each $M\in\rmod A$, for then one has
\[
\gpdim_AM = \max\{i\mid \ext^i_A(M,A)\ne 0\}\,,
\]
and the desired upper bound follows.  

By hypothesis, there exists a  commutative noetherian semilocal ring $R$ such that $A$ is a finite $R$-algebra. We can take $R$ to be the center of $A$, for instance.  We verify the finiteness of $\gpdim_AM$ by an induction on $\dim_RM$, the Krull dimension of $M$ viewed as an $R$-module. The argument is similar to that for \cite[Lemma~B.3.1]{Buchweitz:2021} and goes as follows. Given the upper bound on the G-projective dimension of simple $A$-modules, a standard induction on length yields that $\gpdim_AM\le d$ when the $A$-module $M$ has finite length; equivalently, when $\dim_RM=0$. 

Suppose $\dim_RM\ge 1$. With $\mathfrak{m}$ the Jacobson radical of $R$, consider the  $\mathfrak{m}$-power torsion submodule of $M$, namely, the module
\[
M'\colonequals \{x\in M \mid \mathfrak{m}^n\cdot x=0 \text{ for some $n\ge 0$.}\}
\]
Since $R$ is central in $A$, this is an $A$-submodule of $M$, and of finite length. Thus, given the exact sequence
\[
0\lra M'\lra M\lra \overline{M}\lra 0
\]
it suffices to prove that $\gpdim_A\overline{M}$ is finite. Thus replacing $M$ by $\overline{M}$ one can assume that its $\mathfrak{m}$-power torsion submodule is $0$, equivalently, that there exists an $r\in\mathfrak{m}$ be such that it is not a zero-divisor on $M$; see \cite[Proposition~1.2.1]{Bruns/Herzog:1998}.

We already know $\ext_A^i(M,A)=0$ for $i\ge d+1$, because $\injdim_AA\le d$, so we have only to verify that the natural biduality map is a quasi-isomorphism:
\begin{equation}
\label{eq:ddual}
\theta(M)\colon M \lra \RHom_{\op A}(\RHom_A(M,A),A)\,.
\end{equation}
Equivalently that its mapping cone, $\mathrm{cone}(\theta(M))$ is acyclic; see \cite[(2.3.8)]{Christensen:2000}.

Set $K\colonequals \mathrm{cone}( R\xrightarrow{r}R)$; this is is Koszul complex on the element $r$. In particular $M\otimes_RK$ is the mapping cone of the map $M\xrightarrow{r}M$. Since $r$ is not a zero-divisor on $M$, the natural surjection 
\[
(M\otimes_RK)\lra M/rM
\]
is a quasi-isomorphism. Thus applying $-\otimes_R K$ to the map  \eqref{eq:ddual} gives the map
\[
\theta(M/rM) \colon M/rM \lra \RHom_{\op A}(\RHom_A(M/rM,A),A)\,.
\]
As $\dim_R(M/rM) = \dim_RM -1$, one has that $\gpdim_A(M/rM)$ is finite, by the induction hypothesis.  Thus the map above is  a quasi-isomorphism, that is to say
\[
\mathrm{cone}(\theta(M)) \otimes K \simeq 0\,.
\]
Observe that the source and target of $\theta(M)$ are complexes that have finitely generated cohomology in each degree. Since the complex above is the mapping cone of the morphism
\[
\mathrm{cone}(\theta(M))  \xrightarrow{\ r\ } \mathrm{cone}(\theta(M)) 
\]
we deduce that the induced map  
\[
\mathrm{H}_*(\mathrm{cone}(\theta(M)))\xrightarrow{\ r \ } \mathrm{H}_*(\mathrm{cone}(\theta(M)))
\]
is an isomorphism. Since $r$ is in $\mathfrak{m}$, Nakayama's Lemma yields that the homology of $\mathrm{cone}(\theta(M))$ is zero, as desired.
\end{proof}

Here is an immediate consequence of the preceding result.

\begin{corollary}
\label{cor:gpd-gor}
Let $A$ be a semilocal Noether algebra with Jacobson radical $J$. If $\gpdim_A\Omega^n(A/J)$ is finite for some integer $n\ge 0$, then the ring $A$ is Iwanaga-Gorenstein.\qed
\end{corollary}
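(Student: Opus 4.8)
The plan is to deduce this quickly from Theorem~\ref{thm:gpd} by showing that the hypothesis already forces $\gpdim_A(A/J)$ itself to be finite. Once that is in hand, setting $d\colonequals\gpdim_A(A/J)$, the implication (2)$\Rightarrow$(1) of Theorem~\ref{thm:gpd} shows that $A$ is $d$-Gorenstein, and in particular Iwanaga-Gorenstein, which is the assertion.

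We may assume $\Omega^n(A/J)\ne 0$: otherwise $\pdim_A(A/J)$ is finite, hence so is $\gpdim_A(A/J)$, and we are done as above. Set $m\colonequals\gpdim_A\Omega^n(A/J)$, which is finite by hypothesis. The one remaining ingredient is the elementary fact that passing to an $n$-th syzygy drops the Gorenstein projective dimension by at most $n$; that is,
\[
\gpdim_A(A/J)\ \le\ m+n\,.
\]
To see this, pick a Gorenstein projective resolution $0\to G_m\to\cdots\to G_0\to\Omega^n(A/J)\to 0$ and splice it onto the truncation $0\to\Omega^n(A/J)\to P_{n-1}\to\cdots\to P_0\to A/J\to 0$ of a projective resolution of $A/J$; since projective modules are Gorenstein projective, the resulting exact sequence exhibits a Gorenstein projective resolution of $A/J$ of length $m+n$. (The needed generalities on Gorenstein projective dimension are in \cite[\S2]{Holm:2004}.) Hence $\gpdim_A(A/J)<\infty$, and Theorem~\ref{thm:gpd} completes the proof.

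I do not expect any real obstacle here: the substance is entirely contained in Theorem~\ref{thm:gpd}, and what remains is only the routine syzygy bound above. The sole point worth flagging is that the statement to be proved concerns the injective dimension of $A$ on \emph{both} sides; but this two-sided conclusion is precisely what is encoded in the notion of a $d$-Gorenstein ring, so it is delivered directly by Theorem~\ref{thm:gpd} with no further argument.
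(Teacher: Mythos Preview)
Your argument is correct and is exactly the intended one: the paper marks this corollary as an immediate consequence of Theorem~\ref{thm:gpd} (the statement carries a \qed\ and no separate proof), and what you have written spells out precisely why it is immediate---the standard syzygy bound $\gpdim_A(A/J)\le \gpdim_A\Omega^n(A/J)+n$, followed by the implication (2)$\Rightarrow$(1) of Theorem~\ref{thm:gpd}.
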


One gets also the following symmetry property of the Jacobson radical; confer Proposition~\ref{prp:gsc}.

\begin{corollary}
\label{cor:gpd}
When $A$ is a semilocal Noether algebra with Jacobson radical $J$ one has an equality
\[
\gpdim_A(J)=\gpdim_{\op A}(J)\,.
\]
\end{corollary}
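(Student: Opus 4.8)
The plan is to deduce this from Theorem~\ref{thm:gpd}. That theorem characterizes, for each integer $d\ge 0$, when $\gpdim_A(A/J)\le d$: namely, precisely when $A$ is $d$-Gorenstein, and this condition is left-right symmetric (it also characterizes $\gpdim_{\op A}(A/J)\le d$). Since the Jacobson radical of $A$ coincides with that of $\op A$, and $A/J = A_0$ is the same module data on both sides, the quantities $\gpdim_A(A/J)$ and $\gpdim_{\op A}(A/J)$ are therefore either both infinite (when $A$ is not Iwanaga-Gorenstein) or both equal to the common value $\injdim_AA = \injdim_{\op A}A$ (when $A$ is Iwanaga-Gorenstein). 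So first I would record that $\gpdim_A(A/J)=\gpdim_{\op A}(A/J)$.

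Next I would pass from $A/J$ to $J$ via the exact sequence~\eqref{equ:can}, namely $0\to J\to A\to A_0\to 0$. Since $A$ is a projective $A$-module and $\gpdim_A$ agrees with $\pdim_A$ on zero for the middle term, the standard behavior of Gorenstein projective dimension along short exact sequences gives $\gpdim_AJ = \gpdim_A(A_0) - 1$ whenever $\gpdim_A(A_0)\ge 1$ — more precisely, $J$ is the first syzygy of $A_0$, and for any finitely generated module $M$ over a noetherian ring one has $\gpdim_A\Omega(M) = \gpdim_A(M)-1$ when the latter is positive, and both sides are $0$ when $\gpdim_A(M)\le 1$ and $A_0$ is not Gorenstein-projective. (The only slightly delicate point is the case $A/J$ Gorenstein-projective, i.e. $\gpdim_A(A_0)=0$; but since $A$ is assumed not semisimple, $J\ne 0$, and one checks this forces $\gpdim_A(A_0)\ge 1$, or else argues directly that the formula $\gpdim_AJ = \max\{0, \gpdim_A(A_0)-1\}$ holds in all cases and is symmetric.) The same relation holds over $\op A$.

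Combining the two steps: $\gpdim_AJ = \max\{0,\gpdim_A(A_0)-1\} = \max\{0,\gpdim_{\op A}(A_0)-1\} = \gpdim_{\op A}J$. I expect the only real obstacle to be bookkeeping around the degenerate cases — ensuring the syzygy formula for Gorenstein projective dimension is applied correctly when $\gpdim_A(A_0)$ is small or infinite, and confirming that ``$A$ not semisimple'' rules out the problematic situation — but these are routine given Theorem~\ref{thm:gpd} and the basic theory of $\gpdim$ from \cite{Holm:2004}. No new ideas beyond Theorem~\ref{thm:gpd} should be needed.
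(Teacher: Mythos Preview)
Your approach is essentially the paper's: invoke Theorem~\ref{thm:gpd} for the left-right symmetry of $\gpdim(A/J)$, then descend to $J$ via the syzygy shift for Gorenstein projective dimension (the paper phrases this as $\gpdim_A(J)=\injdim_AA-1$ in the Iwanaga-Gorenstein case, citing \cite[Proposition~2.18]{Holm:2004}). One correction: your claim that ``$A$ not semisimple forces $\gpdim_A(A_0)\ge 1$'' is false---any non-semisimple selfinjective ring has $J\ne 0$ yet $\gpdim_A(A_0)=0$, since every module over such a ring is Gorenstein projective; your fallback formula $\gpdim_A J=\max\{0,\gpdim_A(A_0)-1\}$ is the correct route and handles this case cleanly.
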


\begin{proof}
Given Theorem~\ref{thm:gpd} it remains to note that when $A$ is Iwanaga-Gorenstein, one has $\gpdim_A(J_A)=\injdim A-1$, by \cite[Proposition~2.18]{Holm:2004}.
\end{proof}

\subsection*{Commutative rings}

The result below establishes ~\eqref{conj:1} for commutative rings; in this context see the problem posed in \cite[Remark~6.2.16]{Christensen:2000}

\begin{proposition}
\label{prp:commutative}
When $A$ is commutative  noetherian semilocal ring
\[
\ginjdim_AJ=\injdim_AA\,,
\]
where $J$ is the Jacobson radical of $A$.
\end{proposition}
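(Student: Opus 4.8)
The plan is to reduce the statement to the local case and then play the finite-dimensional and infinite-dimensional situations against each other, using the characterization of finite Gorenstein injective dimension in terms of vanishing of Ext against the residue field.

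First I would reduce to the case when $A$ is local. Since $A$ is commutative noetherian semilocal, $J=\bigcap_{\mathfrak m}\mathfrak m$ runs over the finitely many maximal ideals, and $\injdim_AA=\max_{\mathfrak m}\injdim_{A_{\mathfrak m}}A_{\mathfrak m}$; moreover both Gorenstein injective dimension and injective dimension localize well for finitely generated modules over a noetherian ring, and the natural map $A\to\prod_{\mathfrak m}A_{\mathfrak m}$ together with $J_{\mathfrak m}=\mathfrak m A_{\mathfrak m}$ lets one compute $\ginjdim_AJ$ one maximal ideal at a time. So assume $(A,\mathfrak m,k)$ is a commutative noetherian local ring and $J=\mathfrak m$; we must show $\ginjdim_A\mathfrak m=\injdim_AA$.

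Next, if $\gldim A<\infty$ the result is exactly Proposition~\ref{prop:Gid-lr} (the commutative local case of it), so I may assume $\gldim A=\infty$; I must then show $\ginjdim_A\mathfrak m=\infty$. Suppose to the contrary that $\ginjdim_A\mathfrak m$ is finite. By the theory of Gorenstein injective dimension over a commutative noetherian local ring (see \cite{Holm:2004}, and \cite[Theorem~6.2.6, etc.]{Christensen:2000}), finiteness of $\ginjdim_A\mathfrak m$ forces $A$ to be Cohen--Macaulay, indeed it forces $\ginjdim_A\mathfrak m=\depth A$, and it implies the vanishing $\ext_A^i(k,\mathfrak m)=0$ for all $i>\depth A$. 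Here I would use the exact sequence \eqref{equ:can}, $0\to\mathfrak m\to A\to k\to 0$: from $\ext^i_A(k,\mathfrak m)=0$ for $i\gg0$ and the long exact sequence, the connecting maps give that $\ext^i_A(k,A)\to\ext^i_A(k,k)$ is eventually surjective — but $A$ has a residue field, and by the same reasoning used for Proposition~\ref{prp:pd} and Theorem~\ref{thm:inj-dim} (now applied over the commutative local ring $A$, whose Jacobson radical is $\mathfrak m$), eventual vanishing of $\ext^i_A(k,\mathfrak m)$ is equivalent to $\pdim_Ak<\infty$, i.e. to $\gldim A<\infty$, a contradiction. (Alternatively: finiteness of $\ginjdim$ of any nonzero module over a local ring forces the ring to be Cohen--Macaulay with a dualizing complex-like behaviour, and then $\ginjdim_A\mathfrak m<\infty$ together with the short exact sequence forces $\ginjdim_AA<\infty$, i.e. $A$ Gorenstein; but a Gorenstein local ring of infinite global dimension has $\pdim_Ak=\infty$ and $\injdim_A\mathfrak m=\injdim_Ak=\infty$ contradicting $\ginjdim_A\mathfrak m<\infty$ via Lemma~\ref{lem:Gid} once injective dimension is finite — one has to be careful which implication one invokes.) Conversely, when $\injdim_AA<\infty$, i.e. $A$ is Gorenstein, Lemma~\ref{lem:Gid}(2) gives $\ginjdim_A\mathfrak m\le\dim A$, and the lower bound $\ginjdim_A\mathfrak m\ge\depth A=\dim A$ comes from $\ext^{\dim A}_A(k,\mathfrak m)\ne0$ (which follows from $\ext^{\dim A+1}_A(k,A)=0\ne\ext^{\dim A}_A(k,k)$ applied to \eqref{equ:can}), so equality holds.

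The main obstacle I anticipate is the implication "$\ginjdim_A\mathfrak m<\infty\Rightarrow\gldim A<\infty$" in the infinite global dimension case: one must correctly invoke the right structural theorem about modules of finite Gorenstein injective dimension over a commutative noetherian local ring — namely that their existence forces the ring to be Cohen--Macaulay (this is where the semilocal/noetherian hypotheses are essential and where the Auslander--Bridger / Foxby / Christensen machinery enters) — and then convert the resulting Ext-vanishing $\ext^i_A(k,\mathfrak m)=0$ for $i\gg0$ into finiteness of $\pdim_Ak$ via the commutative-local instance of Proposition~\ref{prp:pd}. Packaging these citations cleanly, and making sure the reduction to the local case is legitimate for the \emph{Gorenstein} injective dimension (not just the ordinary one), is the delicate part; everything else is bookkeeping with \eqref{equ:can} and long exact sequences.
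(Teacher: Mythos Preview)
Your argument has a genuine gap at the crucial step. You split on whether $\gldim A$ is finite and, in the infinite case, aim to show $\ginjdim_A\mathfrak m=\infty$. But this is the wrong target: the proposition asserts $\ginjdim_A\mathfrak m=\injdim_AA$, and $\injdim_AA$ can be finite while $\gldim A=\infty$. Take $A=k[x]/(x^2)$: here $\gldim A=\infty$, yet $\injdim_AA=0$ and $\ginjdim_A\mathfrak m=0$. Your late paragraph treating the Gorenstein case separately does not mesh with the dichotomy you set up.

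More seriously, your central claim---that $\ginjdim_A\mathfrak m<\infty$ forces $\ext^i_A(k,\mathfrak m)=0$ for $i\gg0$---is false, and would in fact prove too much. By Proposition~\ref{prp:pd}, vanishing of $\ext^i_A(k,\mathfrak m)$ for even a single $i\ge1$ gives $\pdim_Ak<\infty$, i.e.\ $A$ regular. So your argument, if it worked, would show $\ginjdim_A\mathfrak m<\infty\Rightarrow A$ regular, which the same example $k[x]/(x^2)$ refutes: there $\mathfrak m\cong k$ has $\ginjdim_A\mathfrak m=0$, yet $\ext^i_A(k,k)\cong k\neq0$ for every $i\ge0$. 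The vanishing theorems for Gorenstein injective dimension in \cite{Holm:2004} and \cite{Christensen:2000} concern $\ext^i_A(L,N)$ only when $L$ has finite injective (or projective) dimension; they say nothing for $L=k$ over a non-regular ring. Likewise, the assertion that a nonzero module of finite Gorenstein injective dimension forces the ring to be Cohen--Macaulay is false: any injective module has Gorenstein injective dimension zero over any local ring.

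The paper's proof sidesteps exactly this obstacle. The hard implication is $\ginjdim_AJ<\infty\Rightarrow\injdim_AA<\infty$. For each maximal ideal $\mathfrak m$ the paper builds the complex $K\otimes_AE$ (Koszul complex on generators of $\mathfrak m$ tensored with the injective hull of $A/\mathfrak m$), which has finite injective dimension by construction. That is what makes \cite[Theorem~2.22]{Holm:2004} applicable: the hypothesis $\ginjdim_AJ<\infty$ then yields $\ext^i_A(K\otimes_AE,J)=0$ for $i\gg0$, so $K\otimes_AE$ has finite projective dimension by Proposition~\ref{prp:pd}. A complex with nonzero finite-length homology and simultaneously finite projective and finite injective dimension forces $A_{\mathfrak m}$ to be Gorenstein by Foxby's criterion~\cite{Foxby:1979}. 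The essential idea you are missing is to test against an object of finite injective dimension rather than against $k$ directly.
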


\begin{proof}
The inequality $\ginjdim_AJ\le \injdim_AA$ always holds; see Lemma~\ref{lem:Gid}. The main task is to prove that when $\ginjdim_AJ$ is finite, so is $\injdim_AA$; the stated equality is then well-known; see \cite[Theorem~6.2.15]{Christensen:2000}. Moreover, $A$ is semilocal, it suffices to prove that $\injdim_{A}A_{\mathfrak m}$ is finite for each maximal ideal $\mathfrak m$ of $A$, that is to say, that the local ring $A_{\mathfrak m}$ is Gorenstein. 

Fix a maximal ideal $\mathfrak m$ and let $K$ be the Koszul complex on a finite generating set for the ideal $\mathfrak m$. Let $E$ the injective hull of $A/\mathfrak m$. Since $E$ is artinian, so are the $A$-modules $\mathrm{H}_i(K\otimes_AE)$. Moreover  one has that
\[
\mathfrak m\cdot \mathrm{H}_*(K\otimes_AE) =0\,;
\]
see by \cite[Proposition~1.6.5]{Bruns/Herzog:1998}. In particular, the $A$-modules $\mathrm{H}_i(K\otimes_AE)$ have finite length. Moreover, with $n$  the size of the chosen generating set for $\mathfrak m$, it is clear that
\[
\mathrm{H}_n(K\otimes_AE)=\{x\in E\mid \mathfrak{m}\cdot x=0\}\,,
\]
by  the structure of the Koszul complex; see also the proof of \cite[Theorem~1.6.16]{Bruns/Herzog:1998}. Since each element of $E$ is annihilated by some power of $\mathfrak m$, see, for instance, \cite[Lemma~3.2.7]{Bruns/Herzog:1998}, we deduce $\mathrm{H}_n(K\otimes_AE)\ne 0$. To complete the proof, it remains to observe that the complex $K\otimes_AE$ has finite injective dimension and also finite projective dimension, for then \cite[Proposition~2.10]{Foxby:1979} can be invoked to conclude that $A_{\mathfrak m}$ is Gorenstein.

By construction, the $A$-complex $K\otimes_AE$ is bounded and consists of injective modules so it has finite injective dimension. By the same token, since $\ginjdim_AJ$ is finite, one gets 
\[
\ext^i_A(K\otimes_AE,J)=0 \quad\text{for $i\gg 0$}\,;
\]
see \cite[Theorem~2.22]{Holm:2004}. Thus from Proposition~\ref{prp:pd}---see also \ref{ch:complexes}---we deduce that the $A$-complex $K\otimes_AE$ has finite projective dimension. 
\end{proof}

Next we turn our focus to Artin algebras. 

\section{Artin algebras}
\label{sse:artin}
Let $A$ be an Artin $R$-algebra, that is to say, $R$ is a commutative artinian ring and $A$ is finite $R$-algebra. We set
\[
DA\colonequals \Hom_R(A, E)\,,
\]
where $E$ is the minimal injective cogenerator of $R$.  By \cite[Theorem~2.22]{Holm:2004} any $A$-module $M$ has the property that
\begin{equation}
\label{eq:artin-Gid}
\ginjdim_AM\geq  \sup\{0, i\mid \ext^i_{A}(DA, M)\neq 0\}\,;
\end{equation}
equality holds if $\ginjdim_AM<\infty$.

\begin{lemma}
\label{lem:artin-Gid}
Any Artin algebra $A$ satisfies
\[
\ginjdim_AJ \ge \injdim_{\op A}A
\]
and equality holds when $\ginjdim_AJ$ is finite
\end{lemma}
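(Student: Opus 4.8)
The plan is to translate both quantities into the vanishing behaviour of $\ext^i_A(DA,J)$ and then quote Proposition~\ref{prp:pd} together with the inequality \eqref{eq:artin-Gid}. The bridge between ``injective dimension of $A$ as a right module'' and ``a statement about the left module $DA$'' is the standard duality $D=\Hom_R(-,E)$.

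First I would record the identity $\injdim_{\op A}A=\pdim_A(DA)$. Since $D$ is an exact contravariant equivalence between $\rmod\op A$ and $\rmod A$ that interchanges projective and injective modules and sends the regular right module $A$ to the left module $DA$, applying $D$ to an injective resolution of $A$ over $\op A$ produces a projective resolution of $DA$ over $A$. Because $A$ is an Artin algebra, $DA$ is a finitely generated $A$-module and $A$ is semiperfect and noetherian, so Proposition~\ref{prp:pd} applies with $M=DA$ and gives
\[
\injdim_{\op A}A=\pdim_A(DA)=\sup\{0,i\mid \ext^i_A(DA,J)\neq 0\}\,.
\]

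Next I would invoke \eqref{eq:artin-Gid} with $M=J$, namely
\[
\ginjdim_AJ\ \ge\ \sup\{0,i\mid \ext^i_A(DA,J)\neq 0\}\,,
\]
with equality when $\ginjdim_AJ$ is finite. Chaining the last two displays yields $\ginjdim_AJ\ge\injdim_{\op A}A$, and equality once $\ginjdim_AJ<\infty$, which is the assertion.

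I do not expect a genuine obstacle here: the substance is carried by Proposition~\ref{prp:pd} and by \eqref{eq:artin-Gid}, both already in hand. The only points requiring care are the left/right bookkeeping behind $\injdim_{\op A}A=\pdim_A(DA)$ — which is exactly what makes the left module $DA$ the correct input for Proposition~\ref{prp:pd} — and the routine verification that $DA$ is finitely generated, so that Proposition~\ref{prp:pd} does apply.
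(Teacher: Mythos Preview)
Your argument is correct and matches the paper's proof essentially line for line: both chain \eqref{eq:artin-Gid} with $M=J$, Proposition~\ref{prp:pd} with $M=DA$, and the identity $\injdim_{\op A}A=\pdim_A(DA)$, the latter of which the paper simply declares ``well known'' while you spell out the duality justification.
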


\begin{proof}
The inequality is the concatenation of (in)equalities
\begin{align*}
\ginjdim_AJ 
	&\ge \sup\{0,i \mid \ext^{i}_{A}(DA, J)\neq 0\}\\
	&=\pdim_A(DA) \\
	&=\injdim_{\op A}A\,,
\end{align*}
where the first one is by \eqref{eq:artin-Gid}, the second from Proposition~\ref{prp:pd}, and the last one is well known.  If $\ginjdim_AJ$ is finite, the inequality above becomes an equality.
\end{proof}

\begin{proposition}
\label{prp:gsc}
Let $A$ be an Artin algebra. The  statements below are equivalent:
\begin{enumerate}[\quad\rm(1)]
\item $\injdim_AA=\injdim_{\op A}A$;
 \item $\ginjdim_AJ=\injdim_AA$ and $\ginjdim_{\op A}J=\injdim_{\op A}A$.
 \end{enumerate}
When they hold, $\ginjdim_AJ=\ginjdim_{\op A}J$.
\end{proposition}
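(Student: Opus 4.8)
The plan is to establish the two implications separately, relying chiefly on Lemma~\ref{lem:artin-Gid} and its opposite-ring version, together with Theorem~\ref{thm:inj-dim}.

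Assume first (1), so $\injdim_AA=\injdim_{\op A}A$; call this common value $g$. We must show $\ginjdim_AJ=\injdim_AA$, and symmetrically for $\op A$. If $g=\infty$ there is nothing to prove for either equality, since $\ginjdim$ is bounded below by a quantity that is already infinite: indeed $\ginjdim_AJ\ge\injdim_{\op A}A=\infty$ by Lemma~\ref{lem:artin-Gid}. So suppose $g<\infty$; then $A$ is Iwanaga-Gorenstein. By Lemma~\ref{lem:Gid}(2) (with $d=g$) we get $\ginjdim_AJ\le g$, hence $\ginjdim_AJ$ is finite, and then the equality in Lemma~\ref{lem:artin-Gid} applies, giving $\ginjdim_AJ=\injdim_{\op A}A=g=\injdim_AA$. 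The same argument, applied to $\op A$ in place of $A$, yields $\ginjdim_{\op A}J=\injdim_AA=\injdim_{\op A}A$. This proves (2), and along the way we have $\ginjdim_AJ=g=\ginjdim_{\op A}J$, which is the final assertion.

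Conversely, assume (2). Then $\ginjdim_AJ=\injdim_AA$ and $\ginjdim_{\op A}J=\injdim_{\op A}A$. If both sides are infinite then $\injdim_AA=\infty=\injdim_{\op A}A$ and (1) holds trivially. If $\ginjdim_AJ$ is finite, then by the equality clause in Lemma~\ref{lem:artin-Gid} we have $\ginjdim_AJ=\injdim_{\op A}A$, so from (2) we get $\injdim_AA=\ginjdim_AJ=\injdim_{\op A}A$, which is (1); symmetrically if $\ginjdim_{\op A}J$ is finite. Since one of the two Gorenstein injective dimensions being finite forces, via (2), both $\injdim_AA$ and $\injdim_{\op A}A$ to be finite and equal, the remaining case cannot occur with the two sides disagreeing. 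Hence (1) holds.

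The only delicate point is the bookkeeping of the infinite cases: one must be careful that "$\ginjdim_AJ=\injdim_AA$" as an equality of elements of $\mathbb{N}\cup\{\infty\}$ is what is being asserted in (2), and that Lemma~\ref{lem:artin-Gid} gives the sharper statement $\ginjdim_AJ=\injdim_{\op A}A$ precisely when $\ginjdim_AJ<\infty$. Granting that, the proof is a short chain of substitutions; there is no real obstacle beyond matching up the two one-sided lemmas. The last sentence of the proposition then follows already from the computation performed under hypothesis (1), or equally from combining the two equalities in (2) with (1).
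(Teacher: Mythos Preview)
Your proof is correct and follows essentially the same approach as the paper: both directions rest on Lemma~\ref{lem:artin-Gid} and Lemma~\ref{lem:Gid}(2). The only difference is cosmetic: for (2)$\Rightarrow$(1) the paper avoids your case split by using the inequality in Lemma~\ref{lem:artin-Gid} directly---from (2) one gets $\injdim_AA=\ginjdim_AJ\ge\injdim_{\op A}A$ and symmetrically, regardless of finiteness---which is slightly cleaner than separating the finite and infinite cases. (Your reference to Theorem~\ref{thm:inj-dim} in the plan is unused and can be dropped.)
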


\begin{proof}

(1)$\Rightarrow$(2): Set $d\colonequals \injdim_AA$. If $d<\infty$, the hypothesis means that $A$ is $d$-Gorenstein, so the result follows from Lemmas~\ref{lem:artin-Gid} and ~\ref{lem:Gid}(2).  If $d=\infty$, it follows from the inequalities in Lemma~\ref{lem:artin-Gid}.

(2)$\Rightarrow$(1):  The hypothesis and  Lemma~\ref{lem:artin-Gid} yield inequalities
\[
\injdim_AA\geq \injdim_{\op A}A\quad \text{and}\quad \injdim_{\op A}A\geq \injdim_AA\,.
\]
The required implication follows.
\end{proof}

\begin{chunk}
Let $A$ be an Artin algebra $A$. The finitistic dimension conjecture is that the supremum of the projective dimension of finitely generatd $A$-modules with finite projective dimension is finite. The Gorenstein symmetry conjecture is that equality (1) in Proposition~\ref{prp:gsc} holds.  It is known that if $\injdim_AA$ is finite, then $\injdim_{\op A}A$ is finite if and only if the finitistic dimension conjecture holds for $A$; see \cite[Proposition~6.10]{Auslander/Reiten:1991}. Thus, the Gorenstein symmetry conjecture is a consequence of the finitistic dimension conjecture.
\end{chunk}

Now we move on to results on the injective dimension of higher syzygies of $A_0$.

\begin{chunk}
Let $M$ be a finitely generated $A$-module, and
\[
0 \lra M \lra I^0 \lra I^1 \lra \cdots
\]
its minimal injective coresolution. The  \emph{dominant dimension} of $M$ is 
\[
\domdim_AM \colonequals \inf\{n\mid \text{$I^n$ is not projective}\}.
\]
Note that if $M$ is not projective-injective, then
\begin{equation}
\label{eq:domdim-injdim}
\domdim_AM \le \injdim_AM\,.
\end{equation}
The \emph{codominant dimension}, denoted $\codomdim_AM$, of $M$ is defined dually, in terms of the projective resolution of $M$. One has an equality
\[
\codomdim_A M = \domdim_{\op A} D_A(M)\,.
\]
The dominant dimension of $A$ is defined as the dominant dimension of the regular module $A$. It is well known that $\domdim A= \domdim A^{op}$. 
\end{chunk}

An algebra $A$ is \emph{minimal Auslander-Gorenstein} if $A$ is Gorenstein and
\[ 
\injdim A \leq \domdim A\,.
\]
In \cite{Iyama/Solber:2018}, where this notion is introduced, it is required that $\domdim A\ge 2$, but this is only really necessary to obtain an Auslander type correspondence with precluster tilting objects and not relevant for most theoretical results, so we drop it; this is in line with \cite{Chan/Iyama/Marczinzik:2022}.

Minimal Auslander-Gorenstein algebras are a subsclass of Auslander-Gorenstein rings $A$ introduced by Auslander. The latter are defined via the condition that the minimal injective coresolution $I^{\bullet}$ of $A$ is such that the flat dimension of $I^i$ is at most $i$; see for example \cite{Foxby/Griffith/Reiten:1975}. Since the flat dimension of $I^d$ is equal to $d$, by \cite[Corollary 7]{Iwanaga/Sato:1996}, the minimality in the name "minimal Auslander-Gorenstein" stems from the fact that these are exactly the Auslander-Gorenstein algebras where the flat dimensions of $I^i$ for $i<d$ can be as small as possible, namely zero.

Examples of minimal Auslander-Gorenstein algebras include selfinjective algebras, higher Auslander algebras (which are  the minimal Auslander-Gorenstein algebras of finite global dimension and are in bijective correspondence with cluster-tilting modules~\cite{Iyama:2007}) and centraliser algebras of  matrices~\cite{Cruz/Marczinzik:2021, Xi/Zhang:2022}. When $A$ is selfinjective one has $\domdim A=\infty$; if $A$ is minimal Auslander-Gorenstein, but not selfinjective, then \eqref{eq:domdim-injdim} yields
\[
1\le \injdim_AA=\domdim A\,.
\]

We denote by $\fpdim A$ the finitistic projective dimension of $A$ and by $\fidim$ the finitistic injective dimension of $A$.

\begin{proposition} 
\label{pr:domdim}
Let $A$ be an algebra and $M$ an $A$-module.
\begin{enumerate}[\quad\rm(1)]
\item If $M$ has finite projective dimension and is not projective-injective, then
\[
\fpdim A \geq \pdim M + \domdim M\,.
\]
\item 
If $M$ has finite injective dimension and is not projective-injective, then
\[ 
\fidim A \geq \pdim M + \codomdim M\,.
\]
\item $\pdim M + \domdim M \geq \domdim A$.
\item $\injdim M + \codomdim M \geq \domdim A$.
\end{enumerate}
\end{proposition}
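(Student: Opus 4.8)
The plan is to prove the four statements of Proposition~\ref{pr:domdim} in tandem, exploiting the duality $D_A$ that converts $A$-modules into $\op A$-modules, interchanging projective and injective, and swapping dominant and codominant dimension. First I would dispose of (3) and (4), which are unconditional. For (3), set $n\colonequals \domdim_A M$; if $n=\infty$ there is nothing to prove, so assume $n$ is finite and $p\colonequals\pdim_A M$ is also finite (if $\pdim_A M=\infty$ the inequality is trivial). Taking the minimal injective coresolution of $M$, the first $n$ terms $I^0,\dots,I^{n-1}$ are projective-injective, so truncating gives a coresolution of $M$ by finitely many projective-injective modules followed by $I^n,\dots$; the point is that a projective-injective module $P$ satisfies $\domdim_A P=\infty$, hence applying the standard dimension-shifting estimate along this coresolution (or dualizing and using the behaviour of dominant dimension under syzygies) forces $\pdim_A M+\domdim_A M\ge\domdim_A A$. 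Concretely, I would use that $\Omega^{-n}(M)$, the $n$-th cosyzygy, has the regular module $A$ as a coresolution's worth of relation and that $\pdim$ drops under cosyzygies in a way matched by the growth of $\domdim$; statement (4) then follows by applying (3) to $D_A(M)$ over $\op A$, using $\codomdim_A M=\domdim_{\op A}D_A(M)$, $\injdim_A M=\pdim_{\op A}D_A(M)$, and $\domdim A=\domdim\op A$.

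For (1), assume $\pdim_A M=p<\infty$ and $\domdim_A M=n$; again we may assume $n<\infty$, else there is nothing to prove, but actually the interesting bound requires $n$ finite anyway since otherwise the left side is $\infty$ only if $\fpdim A=\infty$ which we'd need to argue separately—so suppose both $p$ and $n$ are finite. The key construction is to build a module realizing projective dimension exactly $p+n$: take a minimal injective coresolution $0\to M\to I^0\to\cdots\to I^{n-1}\to I^n\to\cdots$ with $I^0,\dots,I^{n-1}$ projective-injective and $I^n$ not projective. Let $N\colonequals \Omega^{-(n)}(M)$ be the $n$-th cosyzygy, so there is an exact sequence $0\to M\to I^0\to\cdots\to I^{n-1}\to N\to 0$ with all $I^j$ projective; since $M$ has finite projective dimension $p$, a dimension count along this sequence shows $\pdim_A N\le p+n$, and minimality forces it to equal $p+n$ (here one uses that $N$ embeds into the non-projective injective $I^n$, preventing collapse). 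Then $N$ witnesses $\fpdim A\ge p+n=\pdim_A M+\domdim_A M$. Statement (2) is obtained from (1) by the same duality: apply (1) to $D_A(M)$ over $\op A$, note $D_A(M)$ has finite projective dimension iff $M$ has finite injective dimension, and translate $\fpdim\op A$, $\domdim_{\op A}D_A(M)$ back to $\fidim A$ and $\codomdim_A M$ using $\fpdim\op A=\fidim A$ (which follows since $D_A$ is a duality on finitely generated modules exchanging projectives and injectives).

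The main obstacle I anticipate is the minimality bookkeeping in (1): one must argue that $\pdim_A N$ is \emph{exactly} $p+n$ rather than something smaller, i.e.\ that syzygies and cosyzygies interact without unexpected cancellation. The clean way is to avoid asserting the exact value and instead produce, directly, a module of projective dimension $\ge p+n$; one candidate is a high syzygy $\Omega^{p}$ of a module obtained by cosyzygy shifts, or better, to argue contrapositively: if $\fpdim A<p+n$ then every module of finite projective dimension has $\pdim<p+n$, and derive a contradiction by exhibiting $\Omega^{-n}(M)$ or a related module of finite but too-large projective dimension. I would also need the elementary but essential facts that a projective-injective module has infinite dominant dimension, that $\domdim$ of a module is unchanged (or shifts predictably) under taking cosyzygies past the projective-injective part, and that these dominant-dimension facts dualize correctly; each is standard but should be cited or noted carefully so that the reductions via $D_A$ are airtight.
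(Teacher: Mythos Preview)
Your treatment of (1) is essentially the paper's: pass to the cosyzygy $N=\Omega^{-n}(M)$ through the projective-injective part of the minimal injective coresolution and observe that it has finite projective dimension $\pdim_AM+n$, witnessing the bound on $\fpdim A$. The paper simply asserts this equality; the ``minimality bookkeeping'' you flag is handled by the elementary fact that for a short exact sequence $0\to L\to P\to N\to 0$ with $P$ projective one has $\pdim_AN=\pdim_AL+1$ unless the sequence splits, and the hypothesis that $M$ is not projective-injective together with minimality of the coresolution rules out splitting at every stage. (The case $\domdim_AM=\infty$ that you set aside is covered by the same construction: the cosyzygies $\Omega^{-p}(M)$ then have projective dimension $p+\pdim_AM$ for all $p$, forcing $\fpdim A=\infty$.) Your duality reductions for (2) and (4) also match the paper's ``analogous'' remark.

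The genuine gap is in (3). Your sketch begins with the minimal injective coresolution of $M$, notes that its first $n=\domdim_AM$ terms are projective-injective, and then invokes an unspecified ``standard dimension-shifting estimate'' to reach $\domdim A$. But nothing in that coresolution of $M$ sees $\domdim A$: passing to $\Omega^{-n}(M)$ produces a module of projective dimension $\pdim_AM+n$ and dominant dimension $0$, and there is no a priori reason its projective dimension should dominate $\domdim A$---asserting that is precisely (3) again, so the argument is circular. The paper proceeds in the opposite direction: it takes a minimal projective resolution $0\to Y_p\to\cdots\to Y_0\to M\to 0$ and uses Miyachi's result \cite[Corollary~1.3]{Miyachi:2000} to splice the minimal injective coresolutions of the projectives $Y_i$ into an injective coresolution of $M$ whose $k$-th term (for $k\ge 1$) is $\bigoplus_i I_i^{i+k}$, with the $0$-th term a summand of $\bigoplus_i I_i^{i}$. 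Since each $Y_i$ is projective, $\domdim_AY_i\ge\domdim A$, so $I_i^{i+k}$ is projective-injective whenever $i+k<\domdim A$; hence the first $\domdim A-p$ terms of this coresolution of $M$ are projective-injective, yielding $\domdim_AM\ge\domdim A-\pdim_AM$. This construction---manufacturing an injective coresolution of $M$ out of coresolutions of the terms in a projective resolution of $M$---is the key idea your outline is missing; an equivalent route is induction on $\pdim_AM$ using the one-step version $\domdim_AM\ge\min(\domdim_AP,\domdim_A\Omega M-1)$ for $0\to\Omega M\to P\to M\to 0$, but that inequality itself requires the same mapping-cone construction.
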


\begin{proof}
We prove (1) and (3); the proofs of (2) and (4) are analogous.

(1) Set $r\colonequals \pdim_AM$. If $\domdim M$ is infinite then the module $\Omega^{-p}(M)$ have projective dimension equal to $p+r$ for arbitrary $p \geq 1$ and thus $\fpdim A$ is infinite.
Now assume that $\domdim M$ is finite and equal to $u$. Then the module $\Omega^{-u}(M)$ has finite projective dimension equal to $u+r$ and thus $\fpdim A$ is larger than or equal to $u+r=\pdim M+ \domdim M$. \newline
Now we show (3). Let 
$$0 \rightarrow Y_n \rightarrow \cdots \rightarrow Y_1 \rightarrow Y_0 \rightarrow M \rightarrow 0$$ 
be a minimal projective resolution of $M$ so that $M$ has projective dimension equal to $n$.
When $0 \rightarrow Y_i \rightarrow I_i^0 \rightarrow I_i^1 \rightarrow \cdots $ is an injective coresolution of $Y_i$ for $0 \leq i \leq n$, then by Miyachi~\cite[Corollary 1.3]{Miyachi:2000}, the module $M$ has an injective coresolution of the form
\[
0 \rightarrow M \rightarrow Q \rightarrow \bigoplus\limits_{i=0}^{n}{I_i^{i+1}} \rightarrow \bigoplus\limits_{i=0}^{n}{I_i^{i+2}} \rightarrow \cdots 
\]
where $Q$ is a direct summand of $\bigoplus\limits_{i=0}^{n}{I_i^i}$.
Let $\domdim A=s$.
Since the $Y_i$ are projective they all have dominant dimension at least $s$ and thus $\domdim M$ is at least $\domdim A - \pdim M$.
\end{proof}

The following corollary can be seen as a non-commutative analogue for minimal Auslander-Gorenstein algebras of the classical Auslander-Buchsbaum formula in commutative algebra~\cite[Theorem~1.3.3]{Bruns/Herzog:1998}.

\begin{corollary} 
\label{co:domdim}
Let $A$ be a connected, minimal Auslander-Gorenstein algebra and $M$ a finitely generated $A$-module that is not projective-injective.
If $M$  has finite projective dimension,  then 
\begin{align*}
&\pdim_AM+ \domdim_AM= \domdim A\,,\\
&\injdim_AM+ \codomdim_AM= \domdim A\,.
\end{align*}
\end{corollary}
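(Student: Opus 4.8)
The plan is to derive the two equalities by combining the four inequalities of Proposition~\ref{pr:domdim} with the defining property of a minimal Auslander-Gorenstein algebra, namely $\injdim A\le\domdim A$, together with the observation that for a connected such algebra the finitistic dimensions coincide with $\domdim A$. First I would dispose of the trivial case: if $A$ is selfinjective, then $\domdim A=\infty$ and there are no nonzero modules of finite projective dimension other than the projective ones, which (being selfinjective) are projective-injective and hence excluded; so the hypotheses force $A$ to be non-selfinjective, and then \eqref{eq:domdim-injdim} gives $1\le\injdim_AA=\domdim A<\infty$. Set $d\colonequals\injdim_AA=\domdim A$.

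The key intermediate claim is that for a connected minimal Auslander-Gorenstein algebra one has $\fpdim A=d$ and $\fidim A=d$. For the first: since $A$ is Gorenstein with $\injdim_AA=d$, every finitely generated $A$-module of finite projective dimension has projective dimension at most $d$, so $\fpdim A\le d$; and applying Proposition~\ref{pr:domdim}(1) to a module realizing this bound, or more directly noting that a $d$-th syzygy can be chosen non-projective-injective when $A$ is connected and not selfinjective, yields $\fpdim A\ge d$. The argument for $\fidim A=d$ is dual, using the right-hand algebra (recall $\domdim A=\domdim \op A$ and $\injdim_AA=\injdim_{\op A}A$ by Zaks since $A$ is Gorenstein) and Proposition~\ref{pr:domdim}(2). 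The main obstacle I anticipate is precisely this step: verifying that one can always produce a witness module that is \emph{not} projective-injective so that parts (1) and (2) of the Proposition actually apply; this is where connectedness of $A$ is essential, since it rules out the algebra splitting off a selfinjective block, and one must argue that a suitable syzygy or cosyzygy of the simple modules avoids the projective-injective summands.

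With $\fpdim A=\fidim A=d=\domdim A$ in hand, the corollary is immediate. Let $M$ be finitely generated, not projective-injective, with $\pdim_AM<\infty$. Proposition~\ref{pr:domdim}(1) gives $\domdim A=\fpdim A\ge\pdim_AM+\domdim_AM$, while Proposition~\ref{pr:domdim}(3) gives $\pdim_AM+\domdim_AM\ge\domdim A$; together these yield the first equality $\pdim_AM+\domdim_AM=\domdim A$. For the second equality, note that since $A$ is Gorenstein and $\pdim_AM$ is finite, $\injdim_AM$ is finite as well, and $M$ is still not projective-injective, so Proposition~\ref{pr:domdim}(2) gives $\fidim A\ge\pdim_AM+\codomdim_AM$, hence $\domdim A\ge\pdim_AM+\codomdim_AM$; combined with Proposition~\ref{pr:domdim}(4), which gives $\injdim_AM+\codomdim_AM\ge\domdim A$, and using the first equality in the form $\pdim_AM=\domdim A-\domdim_AM$, a short manipulation forces $\injdim_AM+\codomdim_AM=\domdim A$ — provided one also knows $\injdim_AM\le\pdim_AM$, which holds because over a $d$-Gorenstein ring any module of finite projective dimension has $\injdim_AM\le d$ and in fact one has the sharper relation needed here from the dual of the first equality applied on $\op A$ via $D_A$. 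I would phrase the final bookkeeping carefully, as the interplay of the four inequalities is the only place where an off-by-one could creep in.
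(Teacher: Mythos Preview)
Your argument for the first equality is correct and matches the paper: sandwich $\pdim_AM+\domdim_AM$ between the bounds from Proposition~\ref{pr:domdim}(1) and~(3), once $\fpdim A=\domdim A$ is known. Your worry about producing a non-projective-injective witness for $\fpdim A=d$ is misplaced, though: for any Iwanaga-Gorenstein algebra one has $\fpdim A=\fidim A=\injdim_AA$ as a standard fact, independently of the Proposition; connectedness is used only for the final link $\injdim_AA=\domdim A$.

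For the second equality your direct route via parts (2) and (4) does not close. The extra inequality you flag as needed, $\injdim_AM\le\pdim_AM$, is false in general: take $M$ projective but not injective (such $M$ exist since $A$ is connected and not selfinjective), so $\pdim_AM=0$ while $\injdim_AM>0$. Hence no ``short manipulation'' can rescue this line. The clean fix is the one you mention only as a fallback at the very end, and it is exactly what the paper does: apply the already-established first equality to the $\op A$-module $DM$. Since $A$ is Gorenstein and $\pdim_AM<\infty$, also $\pdim_{\op A}DM<\infty$; and $DM$ is not projective-injective. Then $\pdim_{\op A}DM=\injdim_AM$, $\domdim_{\op A}DM=\codomdim_AM$, and $\domdim\op A=\domdim A$, giving the second equality in one line. (As an aside: Proposition~\ref{pr:domdim}(2) as printed appears to contain a typo---the dual of (1) should read $\fidim A\ge\injdim_AM+\codomdim_AM$---and with that correction your direct sandwich via (2) and (4) would in fact go through.)
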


\begin{proof}
We prove the equality involving projective dimension; applying it to $DM$ yields the other one. If $A$ is selfinjective, every module of finite projective dimension is  projective-injective, so there is nothing to prove.
Since $A$ is Gorenstein, one gets the first two equalities below:
\[
\fpdim A=\fidim A =\injdim A=\domdim A\,.
\]
The last one holds as $A$ is connected. Proposition~\ref{pr:domdim} gives the desired equality.
\end{proof}

The result below is a positive answer to Question~\ref{question:syz} for the class of minimal Auslander-Gorenstein algebras.

\begin{theorem}
\label{thm:AG-algebras}
Let $A$ be a minimal Auslander-Gorenstein algebra. Then 
\[
\injdim_A\Omega^n(A_0)=\gldim A
\]
for all $n$ such that $\Omega^n(A_0)\ne0$.
\end{theorem}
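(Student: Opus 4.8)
The plan is to split on whether $\gldim A$ is finite. If $\gldim A=\infty$, then, as a minimal Auslander--Gorenstein algebra is by definition Gorenstein, Proposition~\ref{prp:syz-question}(2) applies and gives $\injdim_A\Omega^n(A_0)=\infty=\gldim A$ for every $n\ge 0$ (and $\Omega^n(A_0)\neq0$ for all $n$, since $\pdim_A A_0=\gldim A=\infty$). So assume $d\colonequals\gldim A<\infty$; then $\Omega^n(A_0)\neq 0$ exactly for $0\le n\le d$, and $\injdim_A\Omega^n(A_0)\le d$ is automatic, so only the reverse inequality is at issue. I would first record that $\domdim A=d$: since $\gldim A=d<\infty$ it is standard that $\injdim_A A=d$ (take a simple $S$ with $\pdim_A S=d$ and note $\ext^d_A(S,A)\neq0$ by minimality of a projective resolution); since $A$ is not semisimple and has finite global dimension it is not self-injective, so \eqref{eq:domdim-injdim} gives $\domdim A\le\injdim_A A=d$; and minimal Auslander--Gorensteinness gives $d=\injdim A\le\domdim A$. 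This is the only place that hypothesis is used beyond Gorensteinness.

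By Proposition~\ref{pr:domdim}(4), $\injdim_A\Omega^n(A_0)\ge\domdim A-\codomdim_A\Omega^n(A_0)=d-\codomdim_A\Omega^n(A_0)$, so it suffices to show $\codomdim_A\Omega^n(A_0)=0$, i.e.\ that the projective cover of $\Omega^n(A_0)$ is not injective. The key point is that a simple module $S$ occurs in $\operatorname{top}\Omega^n(A_0)$ if and only if $\injdim_A S\ge n$: indeed $S$ is a summand of $\operatorname{top}\Omega^n(A_0)$ iff $\ext^n_A(S',S)\neq 0$ for some simple $S'$, and since $D$ is a duality $\ext^n_A(S',S)\cong\ext^n_{\op A}(DS,DS')$, so this happens for some $S'$ exactly when $\pdim_{\op A}(DS)\ge n$, that is, when $\injdim_A S\ge n$. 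Hence, choosing a simple $T$ with $\injdim_A T=d$ (possible since $d=\gldim A$), the projective cover $P(T)$ of $T$ is a direct summand of the projective cover of $\Omega^n(A_0)$ for every $n\le d$, and it remains only to check that $P(T)$ is not injective.

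That last step is the crux, and I would prove the following general fact: over any algebra of finite global dimension $d$, a simple module $T$ with $\injdim_A T=d$ has non-injective projective cover. Choose a simple $S'$ with $\ext^d_A(S',T)\neq0$; then $\pdim_A S'=d$, and in a minimal projective resolution $0\to P_d\xrightarrow{\iota}P_{d-1}\to\cdots\to S'\to0$ the module $P_d$ is projective with $\ext^d_A(S',T)\cong\Hom_A(P_d,T)\neq0$, so $P(T)$ is a direct summand of $P_d$. If $P(T)$ were injective, the monomorphism $P(T)\hookrightarrow P_d\xrightarrow{\iota}P_{d-1}$ would split, exhibiting the nonzero module $\iota(P(T))$ as a direct summand of $P_{d-1}$ lying inside $\operatorname{rad}P_{d-1}$ (by minimality of the resolution), which is impossible by Nakayama's lemma. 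This finishes the argument: $P(T)$, hence the projective cover of $\Omega^n(A_0)$, is not injective, so $\codomdim_A\Omega^n(A_0)=0$ and $\injdim_A\Omega^n(A_0)=d$. The main obstacle I anticipate is precisely pinning down $\codomdim_A\Omega^n(A_0)=0$---combining the duality computation for $\operatorname{top}\Omega^n(A_0)$ with the lemma on simples of maximal injective dimension---while the surrounding estimates are routine.
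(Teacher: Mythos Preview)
Your proof is correct and follows the same overall architecture as the paper's: dispose of the infinite-global-dimension case via Gorensteinness and Proposition~\ref{prp:syz-question}(2), then in the finite case establish $\domdim A=d$, reduce via the dominant-dimension inequality to showing $\codomdim_A\Omega^n(A_0)=0$, and exhibit a non-injective indecomposable projective as a summand of the cover of $\Omega^n(A_0)$.

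The difference lies in how that witness is produced. The paper fixes an indecomposable projective $P$ that is \emph{not} injective and argues, using that the first $d$ terms of the minimal projective resolution of $D(A)$ are projective--injective (a consequence of $\codomdim_A D(A)=\domdim A=d$), that its top $S$ must have $\injdim_AS\ge d$; hence $P$ occurs in the cover of every $\Omega^n(A_0)$. You go the other way: start from a simple $T$ with $\injdim_AT=d$, so that $P(T)$ occurs in each cover by your duality computation of $\operatorname{top}\Omega^n(A_0)$, and then prove the general lemma that over any Artin algebra of finite global dimension $d$ a simple of injective dimension $d$ has non-injective projective cover, via a splitting/Nakayama argument. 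Your route is a bit more elementary---the key lemma uses nothing about the minimal Auslander--Gorenstein hypothesis beyond $\domdim A=d$---and by invoking Proposition~\ref{pr:domdim}(4) directly rather than Corollary~\ref{co:domdim} you also avoid the paper's preliminary reduction to connected algebras. The paper's route, by contrast, makes visible exactly where the higher-Auslander structure of the resolution of $D(A)$ enters.
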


\begin{proof}
It suffices to prove the stated equality for each block, so in the remainder of the proof we assume $A$ is connected. By Proposition~\ref{prp:syz-question} there is nothing to prove when $A$ has infinite global dimension, for $A$ is Gorenstein.  The result is trivial if $A$ is selfinjective, so we  assume $A$ is not selfinjective and $d\colonequals \gldim A$ is finite. 

Since $A$ is connected, $\domdim A=d=\injdim A$. Let $P$ be an indecomposable projective $A$-module that is not injective and $S$ its top. Then $S$ has injective dimension at least $d$. 

Indeed, consider the projective resolution of $D(A)$:
\[
0 \rightarrow L_d \rightarrow \cdots \rightarrow L_0 \rightarrow D(A) \rightarrow 0
\]
Then the $L_i$ are projective-injective for $0\le i\le d-1$ since $A$ is higher Auslander. If $\injdim S<d$, then $\ext_A^r(D(A),S) \neq 0$ for some $r<d$. But this implies that $P$ is a direct summand of $L_r$  and so is injective, contradicting our assumption on $P$.

Since $\injdim_AS\ge d$, one gets that $\ext_A^n(A_0,S) \neq 0$ for $0\le n\le d$. This means that in the projective cover $P_n$ of $\Omega^n(A_0)$ the module $P$ appears at least once and so the codominant dimension of $\Omega^n(A_0)$ is zero. From Corollary~\ref{co:domdim} one gets
\begin{align*}
\injdim \Omega^n(A_0) 
	&=\injdim_A \Omega^n(A_0)+  \codomdim_A \Omega^n(A_0) \\
	& =\domdim A \\
	&= \injdim A\\
	&=\gldim A\,. \qedhere
\end{align*}
\end{proof}

\begin{bibdiv}
\begin{biblist}

\bib{Auslander/Reiten:1991}{article}{
   author={Auslander, Maurice},
   author={Reiten, Idun},
   title={Applications of contravariantly finite subcategories},
   journal={Adv. Math.},
   volume={86},
   date={1991},
   number={1},
   pages={111--152},
   issn={0001-8708},
   review={\MR{1097029}},
   doi={10.1016/0001-8708(91)90037-8},
}

\bib{Avramov/Foxby:1991}{article}{
   author={Avramov, Luchezar L.},
   author={Foxby, Hans-Bjørn},
   title={Homological dimensions of unbounded complexes},
   journal={J. Pure Appl. Algebra},
   volume={71},
   date={1991},
   pages={27},
   issn={0022-4049},
   review={\MR{1117631}},
   doi={10.1016/0022-4049(91)90144-Q},
}

\bib{Beligiannis:2000}{article}{
   author={Beligiannis, Apostolos},
   title={The homological theory of contravariantly finite subcategories: Auslander-Buchweitz contexts, Gorenstein categories and (co-)stabilization},
   journal={Comm. Algebra},
   volume={28},
   date={2000},
   pages={50},
   issn={0092-7872},
   review={\MR{1780017}},
   doi={10.1080/00927870008827105},
}

\bib{Bruns/Herzog:1998}{book}{
      author={Bruns, Winfried},
      author={Herzog, J{\"u}rgen},
       title={Cohen-macaulay rings},
     edition={2},
      series={Cambridge Studies in Advanced Mathematics},
   publisher={Cambridge University Press},
        date={1998},
}

\bib{Buchweitz:2021}{book}{
   author={Buchweitz, Ragnar-Olaf},
   title={Maximal Cohen-Macaulay modules and Tate cohomology},
   note={With appendices and an introduction by Luchezar L. Avramov, Benjamin Briggs, Srikanth B. Iyengar and Janina C. Letz},
   publisher={American Mathematical Society},
   date={2021},
   pages={187},
   isbn={978-1-4704-5340-4},
   review={\MR{4390795}},
}

\bib{Chan/Iyama/Marczinzik:2022}{article}{
  doi = {10.48550/ARXIV.2210.06180},  
  url = {https://arxiv.org/abs/2210.06180},
  author = {Chan, Aaron},
  author= {Iyama, Osamu},
  author={Marczinzik, Rene},
  title = {Dominant Auslander-Gorenstein algebras and Koszul duality},
  publisher = {arXiv},
  year = {2022},
 }

\bib{Christensen:2000}{book}{
   author={Christensen, Lars Winther},
   title={Gorenstein dimensions},
   publisher={Springer-Verlag},
   date={2000},
   pages={212},
   isbn={3-540-41132-1},
   review={\MR{1799866}},
}

\bib{Cruz/Marczinzik:2021}{article}{
   author={Cruz, Tiago},
   author={Marczinzik, Ren\'{e}},
   title={On properly stratified Gorenstein algebras},
   journal={J. Pure Appl. Algebra},
   volume={225},
   date={2021},
   number={12},
   pages={Paper No. 106757, 15},
   issn={0022-4049},
   review={\MR{4245394}},
   doi={10.1016/j.jpaa.2021.106757},
}

\bib{Eilenberg:1956}{article}{
   author={Eilenberg, Samuel},
   title={Homological dimension and syzygies},
   journal={Ann. of Math. (2)},
   volume={64},
   date={1956},
   pages={9},
   issn={0003-486X},
   review={\MR{82489}},
   doi={10.2307/1969977},
}

\bib{Foxby:1979}{article}{
   author={Foxby, Hans-Bj\o rn},
   title={Bounded complexes of flat modules},
   journal={J. Pure Appl. Algebra},
   volume={15},
   date={1979},
   number={2},
   pages={149--172},
   issn={0022-4049},
   review={\MR{535182}},
   doi={10.1016/0022-4049(79)90030-6},
}

\bib{Foxby/Griffith/Reiten:1975}{article}{
   author={Fossum, Robert},
   author={Foxby, Hans-Bj\o rn},
   author={Griffith, Phillip},
   author={Reiten, Idun},
   title={Minimal injective resolutions with applications to dualizing modules and Gorenstein modules},
   journal={Inst. Hautes Études Sci. Publ. Math.},
   date={1975},
   pages={23},
   issn={0073-8301},
   review={\MR{396529}},
}

\bib{Gelinas:2022}{article}{
   author={G\'{e}linas, Vincent},
   title={The depth, the delooping level and the finitistic dimension},
   journal={Adv. Math.},
   volume={394},
   date={2022},
   pages={Paper No. 108052, 34},
   issn={0001-8708},
   review={\MR{4355734}},
   doi={10.1016/j.aim.2021.108052},
}

\bib{Ghosh/Gupta/Puthenpurakal:2018}{article}{
   author={Ghosh, Dipankar},
   author={Gupta, Anjan},
   author={Puthenpurakal, Tony J.},
   title={Characterizations of regular local rings via syzygy modules of the
   residue field},
   journal={J. Commut. Algebra},
   volume={10},
   date={2018},
   number={3},
   pages={327--337},
   issn={1939-0807},
   review={\MR{3874655}},
   doi={10.1216/JCA-2018-10-3-327},
}

\bib{Holm:2004}{article}{
   author={Holm, Henrik},
   title={Gorenstein homological dimensions},
   journal={J. Pure Appl. Algebra},
   volume={189},
   date={2004},
   number={1-3},
   pages={167--193},
   issn={0022-4049},
   review={\MR{2038564}},
   doi={10.1016/j.jpaa.2003.11.007},
}

\bib{Hoshino:1991}{article}{
   author={Hoshino, Mitsuo},
   title={Algebras of finite self-injective dimension},
   journal={Proc. Amer. Math. Soc.},
   volume={112},
   date={1991},
   number={3},
   pages={619--622},
   issn={0002-9939},
   review={\MR{1047011}},
   doi={10.2307/2048680},
}

\bib{Huang/Huang:2012}{article}{
   author={Huang, Chonghui},
   author={Huang, Zhaoyong},
   title={Torsionfree dimension of modules and self-injective dimension of rings},
   journal={Osaka J. Math.},
   volume={49},
   date={2012},
   pages={15},
   issn={0030-6126},
   review={\MR{2903252}},
}

\bib{Ingalls/Paquette:2017}{article}{
   author={Ingalls, Colin},
   author={Paquette, Charles},
   title={Homological dimensions for co-rank one idempotent subalgebras},
   journal={Trans. Amer. Math. Soc.},
   volume={369},
   date={2017},
   number={8},
   pages={5317--5340},
   issn={0002-9947},
   review={\MR{3646764}},
   doi={10.1090/tran/6815},
}

\bib{Iwanaga/Sato:1996}{article}{
   author={Iwanaga, Yasuo},
   author={Sato, Hideo},
   title={On Auslander's $n$-Gorenstein rings},
   journal={J. Pure Appl. Algebra},
   volume={106},
   date={1996},
   pages={16},
   issn={0022-4049},
   review={\MR{1370843}},
   doi={10.1016/0022-4049(95)00003-8},
}

\bib{Iyama:2007}{article}{
   author={Iyama, Osamu},
   title={Auslander correspondence},
   journal={Adv. Math.},
   volume={210},
   date={2007},
   number={1},
   pages={51--82},
   issn={0001-8708},
   review={\MR{2298820}},
   doi={10.1016/j.aim.2006.06.003},
}

\bib{Iyama/Solber:2018}{article}{
   author={Iyama, Osamu},
   author={Solberg, \O yvind},
   title={Auslander-Gorenstein algebras and precluster tilting},
   journal={Adv. Math.},
   volume={326},
   date={2018},
   pages={200--240},
   issn={0001-8708},
   review={\MR{3758429}},
   doi={10.1016/j.aim.2017.11.025},
}

\bib{Lam:1991}{book}{
   author={Lam, T. Y.},
   title={A first course in noncommutative rings},
   series={Graduate Texts in Mathematics},
   volume={131},
   publisher={Springer-Verlag, New York},
   date={1991},
   pages={xvi+397},
   isbn={0-387-97523-3},
   review={\MR{1125071}},
   doi={10.1007/978-1-4684-0406-7},
}

\bib{Marczinzik:2020}{article}{
   author={Marczinzik, Ren\'{e}},
   title={On the injective dimension of the Jacobson radical},
   journal={Proc. Amer. Math. Soc.},
   volume={148},
   date={2020},
   number={4},
   pages={1481--1485},
   issn={0002-9939},
   review={\MR{4069187}},
   doi={10.1090/proc/14911},
}

\bib{Miyachi:2000}{article}{
   author={Miyachi, Jun-ichi},
   title={Injective resolutions of Noetherian rings and cogenerators},
   journal={Proc. Amer. Math. Soc.},
   volume={128},
   date={2000},
   number={8},
   pages={2233--2242},
   issn={0002-9939},
   review={\MR{1662273}},
   doi={10.1090/S0002-9939-00-05305-3},
}

\bib{QPA}{webpage}{
author={The QPA team},
title={QPA - Quivers, path algebras and representations - a GAP package, Version 1.33},
data ={2022},
url={https://folk.ntnu.no/oyvinso/QPA/},
}

\bib{Xi/Zhang:2022}{article}{
   author={Xi, Changchang},
   author={Zhang, Jinbi},
   title={Centralizer matrix algebras and symmetric polynomials of
   partitions},
   journal={J. Algebra},
   volume={609},
   date={2022},
   pages={688--717},
   issn={0021-8693},
   review={\MR{4461655}},
   doi={10.1016/j.jalgebra.2022.06.037},
}

\bib{Zaks:1969}{article}{
   author={Zaks, Abraham},
   title={Injective dimension of semi-primary rings},
   journal={J. Algebra},
   volume={13},
   date={1969},
   pages={73--86},
   issn={0021-8693},
   review={\MR{244325}},
   doi={10.1016/0021-8693(69)90007-6},
}
\end{biblist}
\end{bibdiv}

\end{document}